\numberwithin{equation}{section}
\newtheorem{theorem}{Theorem}[section]
\newtheorem{definition}[theorem]{Definition}
\newtheorem{proposition}[theorem]{Proposition}
\newtheorem{lemma}[theorem]{Lemma}
\newtheorem{corollary}[theorem]{Corollary}
\theoremstyle{definition}
\newtheorem{example}[theorem]{Example}
\newtheorem{remark}[theorem]{Remark}
\newcommand{\cO}{\mbox{${\cal O}$}}
\newcommand{\cU}{\mbox{${\cal U}$}}
\newcommand{\cW}{\mbox{${\cal W}$}}
\title{\textbf{Prime ideals of skew $PBW$ extensions}}
\author{Oswaldo Lezama\\
\texttt{jolezamas@unal.edu.co}
\\Juan Pablo Acosta \& Milton Armando Reyes Villamil
\\ Seminario de Álgebra Constructiva - SAC$^2$\\ Departamento de Matemáticas\\ Universidad Nacional de
Colombia, Sede Bogot\'a}
\date{}
\begin{document}
\maketitle
\begin{abstract}
\noindent In this paper we describe the prime ideals of some important classes of skew $PBW$
extensions, using the classical technique of extending and contracting ideals. Skew $PBW$
extensions include as particular examples Weyl algebras, enveloping algebras of finite-dimensional
Lie algebras (and its quantization), Artamonov quantum polynomials, diffusion algebras, Manin
algebra of quantum matrices, among many others.

\bigskip

\noindent \textit{Key words and phrases.} Prime ideals, semiprime rings, nilradical, skew
polynomial rings, $PBW$ extensions, quantum algebras, skew $PBW$ extensions.

\bigskip

\noindent 2010 \textit{Mathematics Subject Classification.} Primary: 16D25, 16N60, 16S80.
Secondary: 16S36, 16U20.
\end{abstract}

\section{Introduction}

In this paper we describe the prime ideals of some important classes of skew $PBW$ extensions. For
this purpose we will consider the techniques that we found in \cite{Bell}, \cite{Goodearl} and
\cite{Goodearl2}. However, in several of our results, some modifications to these techniques should
be introduced. In this section we recall the definition of skew $PBW$ (Poincaré-Birkhoff-Witt)
extensions defined firstly in \cite{LezamaGallego}, and we will review also some elementary
properties about the polynomial interpretation of this kind of non-commutative rings. Weyl
algebras, enveloping algebras of finite-dimensional Lie algebras (and its quantization), Artamonov
quantum polynomials, diffusion algebras, Manin algebra of quantum matrices, are particular examples
of skew $PBW$ extensions (see \cite{lezamareyes1}).

\begin{definition}\label{gpbwextension}
Let $R$ and $A$ be rings. We say that $A$ is a \textit{skew $PBW$ extension of $R$} $($also called
a $\sigma-PBW$ extension of $R$$)$ if the following conditions hold:
\begin{enumerate}
\item[\rm (i)]$R\subseteq A$.
\item[\rm (ii)]There exist finite elements $x_1,\dots ,x_n\in A$ such $A$ is a left $R$-free module with basis
\begin{center}
${\rm Mon}(A):= \{x^{\alpha}=x_1^{\alpha_1}\cdots x_n^{\alpha_n}\mid \alpha=(\alpha_1,\dots
,\alpha_n)\in \mathbb{N}^n\}$.
\end{center}
In this case it says also that \textit{$A$ is a left polynomial ring over $R$} with respect to
$\{x_1,\dots,x_n\}$ and $Mon(A)$ is the set of standard monomials of $A$. Moreover, $x_1^0\cdots
x_n^0:=1\in Mon(A)$.
\item[\rm (iii)]For every $1\leq i\leq n$ and $r\in R-\{0\}$ there exists $c_{i,r}\in R-\{0\}$ such that
\begin{equation}\label{sigmadefinicion1}
x_ir-c_{i,r}x_i\in R.
\end{equation}
\item[\rm (iv)]For every $1\leq i,j\leq n$ there exists $c_{i,j}\in R-\{0\}$ such that
\begin{equation}\label{sigmadefinicion2}
x_jx_i-c_{i,j}x_ix_j\in R+Rx_1+\cdots +Rx_n.
\end{equation}
Under these conditions we will write $A:=\sigma(R)\langle x_1,\dots ,x_n\rangle$.
\end{enumerate}
\end{definition}
The following proposition justifies the notation and the alternative name given for the skew $PBW$
extensions.
\begin{proposition}\label{sigmadefinition}
Let $A$ be a skew $PBW$ extension of $R$. Then, for every $1\leq i\leq n$, there exists an
injective ring endomorphism $\sigma_i:R\rightarrow R$ and a $\sigma_i$-derivation
$\delta_i:R\rightarrow R$ such that
\begin{center}
$x_ir=\sigma_i(r)x_i+\delta_i(r)$,
\end{center}
for each $r\in R$.
\end{proposition}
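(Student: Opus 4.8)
The plan is to extract $\sigma_i$ and $\delta_i$ directly from condition (iii) of Definition~\ref{gpbwextension}. First I would fix $1\le i\le n$ and, for each $r\in R$, use (iii) to pick $c_{i,r}\in R-\{0\}$ with $x_ir-c_{i,r}x_i\in R$; call this element $\delta_i(r)$, so that $x_ir=c_{i,r}x_i+\delta_i(r)$. Setting $\sigma_i(r):=c_{i,r}$ for $r\neq 0$ and $\sigma_i(0):=0$, $\delta_i(0):=0$, I would then check that $\sigma_i$ and $\delta_i$ are well defined: the key point is that the presentation of an element of $A$ in the left $R$-basis $\mathrm{Mon}(A)$ is unique, so once $x_ir$ is written as an $R$-linear combination of $1$ and $x_i$ (plus higher monomials, which here do not occur), the coefficients $c_{i,r}$ and $\delta_i(r)$ are uniquely determined by $r$. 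This uniqueness is what makes $\sigma_i,\delta_i\colon R\to R$ genuine functions rather than mere choices.

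Next I would verify the algebraic identities. Additivity is immediate: from $x_i(r+s)=x_ir+x_is$ and the definition, $\sigma_i(r+s)x_i+\delta_i(r+s)=(\sigma_i(r)+\sigma_i(s))x_i+(\delta_i(r)+\delta_i(s))$, and uniqueness of coordinates forces $\sigma_i(r+s)=\sigma_i(r)+\sigma_i(s)$ and $\delta_i(r+s)=\delta_i(r)+\delta_i(s)$ (one must handle separately the degenerate cases where $r$, $s$, or $r+s$ is $0$, but these are trivial). For multiplicativity I would compute $x_i(rs)$ in two ways: directly it equals $\sigma_i(rs)x_i+\delta_i(rs)$, while associating as $(x_ir)s=(\sigma_i(r)x_i+\delta_i(r))s=\sigma_i(r)(x_is)+\delta_i(r)s=\sigma_i(r)\sigma_i(s)x_i+\sigma_i(r)\delta_i(s)+\delta_i(r)s$. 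Comparing coordinates gives $\sigma_i(rs)=\sigma_i(r)\sigma_i(s)$ and the twisted Leibniz rule $\delta_i(rs)=\sigma_i(r)\delta_i(s)+\delta_i(r)s$, which is exactly the definition of a $\sigma_i$-derivation. Applying this with $r=s=1$ also yields $\sigma_i(1)=\sigma_i(1)^2$ and, since $x_i\cdot 1=x_i$ forces $\sigma_i(1)x_i+\delta_i(1)=x_i$, we get $\sigma_i(1)=1$ and $\delta_i(1)=0$; so $\sigma_i$ is a ring endomorphism.

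Finally I would prove injectivity of $\sigma_i$. Suppose $\sigma_i(r)=0$; then $x_ir=\delta_i(r)\in R$. If $r\neq 0$, condition (iii) guarantees $c_{i,r}=\sigma_i(r)\in R-\{0\}$, a contradiction; hence $r=0$. This is really the only place the full strength of (iii) — the nonvanishing of $c_{i,r}$ — is used, and I expect it to be the one subtle point: everything else is bookkeeping with the uniqueness of the $R$-basis representation, whereas injectivity is precisely the content that condition (iii) was designed to encode. I would present the argument in that order: define the maps via (iii), invoke basis uniqueness for well-definedness and additivity, derive multiplicativity and the Leibniz rule by the two-way computation of $x_i(rs)$, and close with the injectivity argument.
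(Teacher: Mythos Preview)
Your argument is correct and is exactly the standard way to establish this result: define $\sigma_i$ and $\delta_i$ from condition (iii), use the freeness of $A$ over $R$ with basis $\mathrm{Mon}(A)$ to get well-definedness and to compare coefficients, and read off injectivity from the requirement $c_{i,r}\in R-\{0\}$. The paper itself does not supply a proof here; it simply refers the reader to \cite{LezamaGallego}, Proposition~3, where precisely this computation is carried out. So your proposal fills in what the paper outsources, and there is nothing to compare beyond noting that your direct verification is the expected one.
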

\begin{proof}
See \cite{LezamaGallego}, Proposition 3.
\end{proof}

A particular case of skew $PBW$ extension is when all derivations $\delta_i$ are zero. Another
interesting case is when all $\sigma_i$ are bijective and the constants $c_{ij}$ are invertible. We
recall the following definition (cf. \cite{LezamaGallego}).
\begin{definition}\label{sigmapbwderivationtype}
Let $A$ be a skew $PBW$ extension.
\begin{enumerate}
\item[\rm (a)]
$A$ is quasi-commutative if the conditions {\rm(}iii{\rm)} and {\rm(}iv{\rm)} in Definition
\ref{gpbwextension} are replaced by
\begin{enumerate}
\item[\rm (iii')]For every $1\leq i\leq n$ and $r\in R-\{0\}$ there exists $c_{i,r}\in R-\{0\}$ such that
\begin{equation}
x_ir=c_{i,r}x_i.
\end{equation}
\item[\rm (iv')]For every $1\leq i,j\leq n$ there exists $c_{i,j}\in R-\{0\}$ such that
\begin{equation}
x_jx_i=c_{i,j}x_ix_j.
\end{equation}
\end{enumerate}
\item[\rm (b)]$A$ is bijective if $\sigma_i$ is bijective for
every $1\leq i\leq n$ and $c_{i,j}$ is invertible for any $1\leq i<j\leq n$.
\end{enumerate}
\end{definition}

Some extra notation will be used in the paper.

\begin{definition}\label{1.1.6}
Let $A$ be a skew $PBW$ extension of $R$ with endomorphisms $\sigma_i$, $1\leq i\leq n$, as in
Proposition \ref{sigmadefinition}.
\begin{enumerate}
\item[\rm (i)]For $\alpha=(\alpha_1,\dots,\alpha_n)\in \mathbb{N}^n$,
$\sigma^{\alpha}:=\sigma_1^{\alpha_1}\cdots \sigma_n^{\alpha_n}$,
$|\alpha|:=\alpha_1+\cdots+\alpha_n$. If $\beta=(\beta_1,\dots,\beta_n)\in \mathbb{N}^n$, then
$\alpha+\beta:=(\alpha_1+\beta_1,\dots,\alpha_n+\beta_n)$.
\item[\rm (ii)]For $X=x^{\alpha}\in {\rm Mon}(A)$,
$\exp(X):=\alpha$ and $\deg(X):=|\alpha|$.
\item[\rm (iii)]If $f=c_1X_1+\cdots +c_tX_t$,
with $X_i\in Mon(A)$ and $c_i\in R-\{0\}$, then $\deg(f):=\max\{\deg(X_i)\}_{i=1}^t.$
\end{enumerate}
\end{definition}
The skew $PBW$ extensions can be characterized in a similar way as was done in
\cite{Gomez-Torrecillas2} for $PBW$ rings.
\begin{theorem}\label{coefficientes}
Let $A$ be a left polynomial ring over $R$ w.r.t. $\{x_1,\dots,x_n\}$. $A$ is a skew $PBW$
extension of $R$ if and only if the following conditions hold:
\begin{enumerate}
\item[\rm (a)]For every $x^{\alpha}\in {\rm Mon}(A)$ and every $0\neq
r\in R$ there exist unique elements $r_{\alpha}:=\sigma^{\alpha}(r)\in R-\{0\}$ and $p_{\alpha
,r}\in A$ such that
\begin{equation}\label{611}
x^{\alpha}r=r_{\alpha}x^{\alpha}+p_{\alpha , r},
\end{equation}
where $p_{\alpha ,r}=0$ or $\deg(p_{\alpha ,r})<|\alpha|$ if $p_{\alpha , r}\neq 0$. Moreover, if
$r$ is left invertible, then $r_\alpha$ is left invertible.

\item[\rm (b)]For every $x^{\alpha},x^{\beta}\in {\rm Mon}(A)$ there
exist unique elements $c_{\alpha,\beta}\in R$ and $p_{\alpha,\beta}\in A$ such that
\begin{equation}\label{612}
x^{\alpha}x^{\beta}=c_{\alpha,\beta}x^{\alpha+\beta}+p_{\alpha,\beta},
\end{equation}
where $c_{\alpha,\beta}$ is left invertible, $p_{\alpha,\beta}=0$ or
$\deg(p_{\alpha,\beta})<|\alpha+\beta|$ if $p_{\alpha,\beta}\neq 0$.
\end{enumerate}
\begin{proof}
See \cite{LezamaGallego}, Theorem 7.
\end{proof}
\end{theorem}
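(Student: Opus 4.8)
The statement is an equivalence, and I would prove the two implications separately, with essentially all the work in the forward direction.

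For ($\Rightarrow$), assume $A=\sigma(R)\langle x_1,\dots,x_n\rangle$. The plan is to reduce both (a) and (b) to one \emph{word reduction lemma}: for every word $w=x_{i_1}x_{i_2}\cdots x_{i_\ell}$ in the $x_i$'s, regarded as an element of $A$, its expansion over the basis ${\rm Mon}(A)$ (unique by \ref{gpbwextension}(ii)) is supported on monomials $x^\gamma$ with $|\gamma|\le\ell$, and the coefficient of $x^{\gamma_0}$ — with $\gamma_0\in\mathbb{N}^n$ the increasing rearrangement of $(i_1,\dots,i_\ell)$, the only index of degree $\ell$ that can appear — is left invertible. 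Granting the lemma, (a) follows by taking $w=x^\alpha$ and then moving $r$ to the left through its $|\alpha|$ letters via Proposition \ref{sigmadefinition}, which turns $x^\alpha r$ into $\sigma^\alpha(r)x^\alpha$ plus a sum of words of length $<|\alpha|$, hence of degree $<|\alpha|$; here $\sigma^\alpha(r)\neq 0$ because each $\sigma_i$ is an injective ring endomorphism, and $\sigma^\alpha(r)$ is left invertible whenever $r$ is, since a ring homomorphism carries $vu=1$ to $\sigma(v)\sigma(u)=1$. Likewise (b) follows by taking $w$ to be the concatenated word $x^\alpha x^\beta$, whose increasing rearrangement is exactly the increasing word of $\alpha+\beta$, so the lemma yields $x^\alpha x^\beta=c_{\alpha,\beta}x^{\alpha+\beta}+p_{\alpha,\beta}$ with $c_{\alpha,\beta}$ left invertible and $\deg p_{\alpha,\beta}<|\alpha+\beta|$; the uniqueness asserted in (a) and (b) is automatic from freeness.

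The lemma I would prove by a double induction: outer on the length $\ell$, inner on the number of inversions of $(i_1,\dots,i_\ell)$. A word with no inversions is a standard monomial, with leading coefficient $1$. Otherwise pick an adjacent descent $i_s>i_{s+1}$ and use \eqref{sigmadefinicion2} to replace $x_{i_s}x_{i_{s+1}}$ by $c\,x_{i_{s+1}}x_{i_s}+q$ with $c:=c_{i_{s+1},i_s}$ and $q\in R+Rx_1+\cdots+Rx_n$; then use \eqref{sigmadefinicion1} (i.e.\ Proposition \ref{sigmadefinition}) to move $c$ and the scalars inside $q$ to the far left of the word. The result is a scalar multiple of the word with $i_s$, $i_{s+1}$ transposed — same length, strictly fewer inversions, so the inner hypothesis applies and its leading coefficient (for the same $x^{\gamma_0}$) is left invertible — plus a sum of words of length $\le\ell-1$, to which the outer hypothesis applies; by that hypothesis these contribute only monomials of degree $\le\ell-1$, so the coefficient of $x^{\gamma_0}$ is untouched by them and gets multiplied only by $c$ and by images under various $\sigma_j$. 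Hence I also need that the $c_{i,j}$ of \eqref{sigmadefinicion2} with $i<j$ are left invertible: reading \eqref{sigmadefinicion2} for the ordered pairs $(i,j)$ and $(j,i)$ gives $x_jx_i=c_{i,j}x^{e_i+e_j}+p$ and $x_ix_j=c_{j,i}x_jx_i+p'$ with $p,p'\in R+Rx_1+\cdots+Rx_n$ (here $e_k$ denotes the $k$th canonical vector of $\mathbb{N}^n$), and substituting the first relation into the second and comparing, in the free module $A$, the coefficients of the degree-two basis element $x^{e_i+e_j}=x_ix_j$ gives $c_{j,i}c_{i,j}=1$. Since $\sigma_j$ preserves left invertibility and products of left-invertible elements are left invertible, the coefficient of $x^{\gamma_0}$ remains left invertible at every transposition, closing the induction.

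For ($\Leftarrow$), assume $A$ is a left polynomial ring over $R$ w.r.t.\ $\{x_1,\dots,x_n\}$ — so parts (i) and (ii) of Definition \ref{gpbwextension} already hold — and that (a), (b) are valid (read without presupposing the $\sigma_i$, which are recovered afterwards from Proposition \ref{sigmadefinition}). Only (iii) and (iv) remain. Condition (iii) is immediate from (a) with $\alpha=e_i$: then $p_{e_i,r}$ has degree $<1$, hence lies in $R$, and $c_{i,r}:=r_{e_i}\in R-\{0\}$ works in \eqref{sigmadefinicion1}. For (iv) I apply (b) with $\alpha=e_j$, $\beta=e_i$. If $i\le j$, then $x^{e_i+e_j}=x_ix_j$ and \eqref{612} reads $x_jx_i-c_{e_j,e_i}x_ix_j=p_{e_j,e_i}\in R+Rx_1+\cdots+Rx_n$ with $c_{e_j,e_i}$ left invertible, hence nonzero, so $c_{i,j}:=c_{e_j,e_i}$ works. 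If $i>j$, then $x^{e_i+e_j}=x_jx_i$ is already standard, so \eqref{612} forces $c_{e_j,e_i}=1$ and $p_{e_j,e_i}=0$; instead I apply (b) to $\alpha=e_i$, $\beta=e_j$, obtaining $x_ix_j=c_{e_i,e_j}x_jx_i+p_{e_i,e_j}$ with $c_{e_i,e_j}$ left invertible, and multiplying on the left by a left inverse $u$ of $c_{e_i,e_j}$ gives $x_jx_i-u\,x_ix_j=-u\,p_{e_i,e_j}\in R+Rx_1+\cdots+Rx_n$ with $u\neq 0$, so $c_{i,j}:=u$ works. I expect the one genuine obstacle to be the bookkeeping in the word reduction lemma — choosing the induction parameters so the rewriting terminates, checking that the length-dropping remainders $q$ never create a top-degree monomial, and propagating left invertibility of the leading coefficient through the transpositions; the deductions of (a) and (b) and the entire converse are routine once that lemma and the identity $c_{j,i}c_{i,j}=1$ are in hand.
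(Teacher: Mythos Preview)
The paper does not actually prove this theorem: its entire proof is the single line ``See \cite{LezamaGallego}, Theorem 7'', so there is no in-paper argument to compare yours against. Your proposal is correct and is the expected route to such a statement---a straightening argument for arbitrary words in the $x_i$ by double induction on length and number of inversions, combined with the identity $c_{j,i}c_{i,j}=1$ (read off from \eqref{sigmadefinicion2} applied to both $(i,j)$ and $(j,i)$) that makes the swap constants left invertible; the deductions of (a), (b) and the converse from this lemma are routine, exactly as you describe. This is essentially how the cited source proceeds as well.

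The one spot where your sketch is slightly compressed is the step ``use Proposition \ref{sigmadefinition} to move $c$ and the scalars inside $q$ to the far left of the word''. When you push a scalar $c$ leftwards through a prefix $u=x_{j_1}\cdots x_{j_m}$, a single application of $x_kr=\sigma_k(r)x_k+\delta_k(r)$ leaves behind a term of the form $x_{j_1}\cdots x_{j_{m-1}}\delta_{j_m}(c)$, which again has a scalar trapped on the right; so one needs a small separate induction on $|u|$ to conclude that $uc=\bigl(\sigma_{j_1}\cdots\sigma_{j_m}(c)\bigr)u+\sum_k r_kw_k$ with $r_k\in R$ and $w_k$ genuine words of length $<|u|$. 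Once that sublemma is stated, all the remainder terms produced in the transposition step really are $R$-linear combinations of words of length $\le\ell-1$, the outer induction applies to them, and the leading coefficient of $x^{\gamma_0}$ is unaffected by them and picks up only the left-invertible factor $\sigma_{j_1}\cdots\sigma_{j_{s-1}}(c_{i_{s+1},i_s})$. With that made explicit the induction closes cleanly.
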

We remember also the following facts from \cite{LezamaGallego}.
\begin{remark}\label{identities}
(i) We observe that if $A$ is quasi-commutative, then $p_{\alpha,r}=0$ and $p_{\alpha,\beta}=0$ for
every $0\neq r\in R$ and every $\alpha,\beta \in \mathbb{N}^n$.

(ii) If $A$ is bijective, then $c_{\alpha,\beta}$ is invertible for any $\alpha,\beta\in
\mathbb{N}^n$.

(iii) In $Mon(A)$ we define
\begin{center}
$x^{\alpha}\succeq x^{\beta}\Longleftrightarrow
\begin{cases}
x^{\alpha}=x^{\beta}\\
\text{or} & \\
x^{\alpha}\neq x^{\beta}\, \text{but} \, |\alpha|> |\beta| & \\
\text{or} & \\
x^{\alpha}\neq x^{\beta},|\alpha|=|\beta|\, \text{but $\exists$ $i$ with} &
\alpha_1=\beta_1,\dots,\alpha_{i-1}=\beta_{i-1},\alpha_i>\beta_i.
\end{cases}$
\end{center}
It is clear that this is a total order on $Mon(A)$. If $x^{\alpha}\succeq x^{\beta}$ but
$x^{\alpha}\neq x^{\beta}$, we write $x^{\alpha}\succ x^{\beta}$. Each element $f\in A$ can be
represented in a unique way as $f=c_1x^{\alpha_1}+\cdots +c_tx^{\alpha_t}$, with $c_i\in R-\{0\}$,
$1\leq i\leq t$, and $x^{\alpha_1}\succ \cdots \succ x^{\alpha_t}$. We say that $x^{\alpha_1}$ is
the \textit{leader monomial} of $f$ and we write $lm(f):=x^{\alpha_1}$ ; $c_1$ is the
\textit{leader coefficient} of $f$, $lc(f):=c_1$, and $c_1x^{\alpha_1}$ is the \textit{leader term}
of $f$ denoted by $lt(f):=c_1x^{\alpha_1}$.
\end{remark}
A natural and useful result that we will use later is the following property.
\begin{proposition}\label{1.1.10a}
Let A be a skew PBW extension of a ring R. If R is a domain, then A is a domain.
\end{proposition}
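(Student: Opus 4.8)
The plan is to use the leading-term machinery from Theorem \ref{coefficientes} and Remark \ref{identities}. Suppose $f,g\in A$ are both nonzero; I want to show $fg\neq 0$. Write them in the canonical form with respect to the total order $\succeq$, say $f=c_1x^{\alpha_1}+\cdots+c_sx^{\alpha_s}$ and $g=d_1x^{\beta_1}+\cdots+d_tx^{\beta_t}$ with $c_i,d_j\in R-\{0\}$ and $x^{\alpha_1}\succ\cdots\succ x^{\alpha_s}$, $x^{\beta_1}\succ\cdots\succ x^{\beta_t}$, so that $lt(f)=c_1x^{\alpha_1}$ and $lt(g)=d_1x^{\beta_1}$. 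The strategy is to compute $lt(fg)$ and show it is nonzero.

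First I would analyze a single product $c_ix^{\alpha_i}d_jx^{\beta_j}$. By part (a) of Theorem \ref{coefficientes}, $x^{\alpha_i}d_j=\sigma^{\alpha_i}(d_j)x^{\alpha_i}+p_{\alpha_i,d_j}$ where the remainder term has degree strictly less than $|\alpha_i|$; since $d_j\neq 0$ and each $\sigma_k$ is injective (Proposition \ref{sigmadefinition}), $\sigma^{\alpha_i}(d_j)\neq 0$. Then by part (b), $x^{\alpha_i}x^{\beta_j}=c_{\alpha_i,\beta_j}x^{\alpha_i+\beta_j}+p_{\alpha_i,\beta_j}$ with $c_{\alpha_i,\beta_j}$ left invertible (in particular nonzero) and the remainder of degree strictly less than $|\alpha_i+\beta_j|$. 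Combining, the term $c_ix^{\alpha_i}d_jx^{\beta_j}$ equals $c_i\,\sigma^{\alpha_i}(d_j)\,c_{\alpha_i,\beta_j}\,x^{\alpha_i+\beta_j}$ plus terms of degree strictly below $|\alpha_i+\beta_j|$; and because $R$ is a domain and all three factors $c_i,\sigma^{\alpha_i}(d_j),c_{\alpha_i,\beta_j}$ are nonzero, the coefficient $c_i\sigma^{\alpha_i}(d_j)c_{\alpha_i,\beta_j}$ is nonzero. So each such product has a well-defined nonzero leading term sitting at monomial $x^{\alpha_i+\beta_j}$.

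Next I would isolate the dominant contribution in $fg=\sum_{i,j}c_ix^{\alpha_i}d_jx^{\beta_j}$. The claim is that $x^{\alpha_1+\beta_1}$ is strictly the largest monomial appearing among all the $x^{\alpha_i+\beta_j}$ and all the lower-order remainder terms, so nothing can cancel it. For the remainders this is immediate since they have degree $<|\alpha_i+\beta_j|\le|\alpha_1+\beta_1|$. For the main monomials $x^{\alpha_i+\beta_j}$ with $(i,j)\neq(1,1)$ I need $x^{\alpha_1+\beta_1}\succ x^{\alpha_i+\beta_j}$; this follows from the compatibility of the order $\succeq$ with addition of exponents — if $\alpha_1\succeq\alpha_i$ and $\beta_1\succeq\beta_j$ with at least one strict, then $\alpha_1+\beta_1\succ\alpha_i+\beta_j$. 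I expect establishing this additivity/compatibility of $\succeq$ under $+$ to be the one genuinely routine-but-necessary step; it is a short check through the three cases in the definition of $\succeq$ (compare total degrees first, then the lexicographic tie-break). Granting it, $lt(fg)=c_1\sigma^{\alpha_1}(d_1)c_{\alpha_1,\beta_1}x^{\alpha_1+\beta_1}\neq 0$, hence $fg\neq 0$, so $A$ is a domain.
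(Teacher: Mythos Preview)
Your argument is correct: the leading-term analysis via Theorem~\ref{coefficientes} and the deglex order of Remark~\ref{identities} shows that $lt(fg)=c_1\sigma^{\alpha_1}(d_1)c_{\alpha_1,\beta_1}x^{\alpha_1+\beta_1}\neq 0$, and the additivity check for $\succeq$ that you flag is indeed routine (compare total degrees, then the first differing coordinate). The paper itself does not give a proof of this proposition but simply refers to \cite{lezamareyes1}; your leading-term argument is exactly the standard one and is what the cited reference carries out.
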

\begin{proof}
See \cite{lezamareyes1}.
\end{proof}

The next theorem characterizes the quasi-commutative skew $PBW$ extensions.

\begin{theorem}\label{1.3.3}
Let $A$ be a quasi-commutative skew $PBW$ extension of a ring $R$. Then,
\begin{enumerate}
\item[\rm (i)] $A$ is isomorphic to an iterated skew polynomial ring of
endomorphism type, i.e.,
\begin{center}
$A\cong R[z_1;\theta_1]\cdots [z_{n};\theta_n]$.
\end{center}
\item[\rm (ii)] If $A$ is bijective, then each
endomorphism $\theta_i$ is bijective, $1\leq i\leq n$.
\end{enumerate}
\end{theorem}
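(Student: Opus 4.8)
The plan is to build the iterated skew polynomial ring one variable at a time, matching it with the filtration of $A$ by the number of generators used. First I would set $R_0 := R$ and, for $1 \le k \le n$, let $R_k$ be the $R$-subalgebra of $A$ generated by $R$ and $x_1, \dots, x_k$; by the PBW basis condition (ii) of Definition \ref{gpbwextension}, $R_k$ is a left $R$-free module with basis $\{x_1^{\alpha_1} \cdots x_k^{\alpha_k}\}$, and in particular $R_k$ is itself a quasi-commutative skew PBW extension $\sigma(R)\langle x_1, \dots, x_k\rangle$. The key observation is that, because $A$ is quasi-commutative, Remark \ref{identities}(i) gives $p_{\alpha, r} = 0$ and $p_{\alpha, \beta} = 0$, so the relations (iii$'$) and (iv$'$) read $x_i r = c_{i,r} x_i$ and $x_j x_i = c_{i,j} x_i x_j$ with no lower-order correction terms. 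Restricting attention to the passage from $R_{k-1}$ to $R_k$: the element $x_k$ satisfies $x_k s = \theta_k(s) x_k$ for every $s \in R_{k-1}$, where $\theta_k : R_{k-1} \to R_{k-1}$ is defined on the $R$-basis monomials of $R_{k-1}$ by iterating the scalars $c_{k,r}$ and $c_{i,k}$.

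Next I would check that this $\theta_k$ is a well-defined ring endomorphism of $R_{k-1}$. Well-definedness and additivity follow because $x_k(\cdot)$ is a left-multiplication operator on $A$, hence additive, and it maps $R_{k-1}$ into $R_{k-1} x_k$; writing $x_k s = \theta_k(s) x_k$ uniquely (using left $R$-freeness of $A$ on $\mathrm{Mon}(A)$) pins down $\theta_k(s)$ and makes $\theta_k$ additive. Multiplicativity: from $x_k (st) = \theta_k(st) x_k$ and $x_k(st) = (x_k s) t = \theta_k(s) x_k t = \theta_k(s)\theta_k(t) x_k$, uniqueness forces $\theta_k(st) = \theta_k(s)\theta_k(t)$; and $x_k \cdot 1 = 1 \cdot x_k$ gives $\theta_k(1) = 1$. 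Injectivity of $\theta_k$ follows from Proposition \ref{sigmadefinition} applied at the level of the constants together with the fact that the $c$'s are nonzero (for part (i) one only needs the endomorphism property, not bijectivity). Then $R_k \cong R_{k-1}[z_k; \theta_k]$: the left $R_{k-1}$-module $R_k$ is free on $\{1, x_k, x_k^2, \dots\}$ by the PBW basis, and the commutation rule $x_k s = \theta_k(s) x_k$ is exactly the defining relation of the Ore extension, so the map $z_k \mapsto x_k$ extends to an isomorphism. Iterating from $k = 1$ to $n$ yields $A = R_n \cong R[z_1; \theta_1]\cdots[z_n; \theta_n]$, proving (i).

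For (ii), assume $A$ is bijective, so each $\sigma_i$ is bijective and each $c_{i,j}$ is invertible. I would show $\theta_k$ is surjective on $R_{k-1}$ by induction on $k$. On the scalar part $R$, $\theta_k$ restricts to $\sigma_k$, which is bijective by hypothesis. On a basis monomial $x^\gamma = x_1^{\gamma_1} \cdots x_{k-1}^{\gamma_{k-1}}$, the rule $x_k x^\gamma = (\text{invertible scalar}) \, x^\gamma x_k$ shows $\theta_k(x^\gamma)$ is an invertible scalar multiple of $x^\gamma$ — here I use that conjugation by $x_k$ on each $x_i$ ($i < k$) picks up the invertible constant coming from (iv$'$), composed appropriately. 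Since $\theta_k$ maps the free $R$-basis of $R_{k-1}$ bijectively (up to invertible scalars) to itself and is $\sigma_k$-semilinear with $\sigma_k$ bijective, a short argument by degree (or a matrix-triangularity argument in the grading by $|\gamma|$) shows $\theta_k$ is onto, hence bijective.

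The main obstacle I expect is the bookkeeping in defining $\theta_k$ explicitly on all of $R_{k-1}$ and verifying it is a genuine ring homomorphism rather than merely an additive semilinear map: one must be careful that the scalars $c_{i,r}$ and $c_{i,j}$ assemble consistently when $x_k$ is pushed past a product of lower generators and scalars, and the clean way to see this is to work entirely inside $A$ and exploit the uniqueness of the representation in terms of $\mathrm{Mon}(A)$ (left $R$-freeness), which converts every identity about $\theta_k$ into an identity about left multiplication by $x_k$ in $A$ — something automatically associative and distributive. Once that is in place, the Ore-extension isomorphism and the bijectivity claim in (ii) are routine.
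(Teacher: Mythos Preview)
The paper does not prove this theorem in the text; the proof is deferred entirely to \cite{lezamareyes1}, so there is nothing here to compare your argument against directly. That said, your proposal is correct and is the standard construction one would expect to find in the cited reference: build the tower $R=R_0\subset R_1\subset\cdots\subset R_n=A$ of subalgebras generated by initial segments of the variables, use the PBW basis together with the quasi-commutative relations (iii$'$)--(iv$'$) to see that left multiplication by $x_k$ carries $R_{k-1}$ into $R_{k-1}x_k$, read off $\theta_k$ from the unique factorization $x_k s=\theta_k(s)x_k$, and then identify $R_k\cong R_{k-1}[z_k;\theta_k]$ via the freeness of $R_k$ over $R_{k-1}$ on the powers of $x_k$. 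Your verification that $\theta_k$ is a ring endomorphism by working inside $A$ and invoking uniqueness of the monomial expansion is exactly the right way to avoid the bookkeeping you flag as the main obstacle. For part (ii) the computation is also correct: in the bijective case each $c_{\alpha,\beta}$ is invertible (Remark \ref{identities}(ii)), so $\theta_k$ acts on each homogeneous piece $Rx^\gamma$ as $r\mapsto\sigma_k(r)c$ with $c$ invertible and $\sigma_k$ bijective, hence bijectively.

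One minor caveat: your parenthetical claim that $\theta_k$ is injective already in the generality of part (i) is not fully justified---Theorem \ref{coefficientes} only guarantees that the constants $c_{\alpha,\beta}$ are \emph{left} invertible, which does not by itself prevent them from being right zero-divisors when $R$ is not a domain---but as you yourself observe, injectivity is not needed for the Ore-extension construction, so this does not affect the proof.
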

\begin{proof}
 See \cite{lezamareyes1}.
\end{proof}

\begin{theorem}\label{1.3.2}
Let $A$ be an arbitrary skew $PBW$ extension of $R$. Then, $A$ is a filtered ring with filtration
given by
\begin{equation}\label{eq1.3.1a}
F_m:=\begin{cases} R & {\rm if}\ \ m=0\\ \{f\in A\mid {\rm deg}(f)\le m\} & {\rm if}\ \ m\ge 1
\end{cases}
\end{equation}
and the corresponding graded ring $Gr(A)$ is a quasi-commutative skew $PBW$ extension of $R$.
Moreover, if $A$ is bijective, then $Gr(A)$ is a quasi-commutative bijective skew $PBW$ extension
of $R$.
\end{theorem}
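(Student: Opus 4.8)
The plan is to verify the filtered-ring axioms for $\{F_m\}_{m\ge 0}$ directly, and then to unwind the definition of the associated graded ring $Gr(A):=\bigoplus_{m\ge 0}F_m/F_{m-1}$ (with the convention $F_{-1}:=0$), checking that it meets conditions (i), (ii) of Definition \ref{gpbwextension} together with (iii$'$), (iv$'$) of Definition \ref{sigmapbwderivationtype}, and finally transferring bijectivity. For the first assertion, the inclusions $F_0\subseteq F_1\subseteq\cdots$, the equality $\bigcup_m F_m=A$, and $1\in R=F_0$ are immediate from Definition \ref{1.1.6}, since every $f\in A$ has a finite degree. The content is $F_mF_k\subseteq F_{m+k}$: I would write $f=\sum_i a_ix^{\alpha_i}\in F_m$, $g=\sum_j b_jx^{\beta_j}\in F_k$ (so $|\alpha_i|\le m$, $|\beta_j|\le k$) and expand $fg=\sum_{i,j}a_i\,x^{\alpha_i}b_j\,x^{\beta_j}$. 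By Theorem \ref{coefficientes}(a), $x^{\alpha_i}b_j=\sigma^{\alpha_i}(b_j)x^{\alpha_i}+p_{\alpha_i,b_j}$ with $\deg p_{\alpha_i,b_j}<|\alpha_i|$; by Theorem \ref{coefficientes}(b), $x^{\alpha_i}x^{\beta_j}=c_{\alpha_i,\beta_j}x^{\alpha_i+\beta_j}+p_{\alpha_i,\beta_j}$ with degree $\le|\alpha_i+\beta_j|$, and a second application of Theorem \ref{coefficientes}(b) bounds $\deg(p_{\alpha_i,b_j}\,x^{\beta_j})\le|\alpha_i|+|\beta_j|-1$. Adding these bounds gives $\deg(fg)\le m+k$.

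Next I would examine $Gr(A)$. Since $A=\bigoplus_{\alpha\in\mathbb N^n}Rx^\alpha$ as a left $R$-module and $F_m=\bigoplus_{|\alpha|\le m}Rx^\alpha$, each quotient $F_m/F_{m-1}$ is left $R$-free with basis $\{\overline{x^\alpha}:|\alpha|=m\}$ (the bar denoting the class in degree $m$), so $Gr(A)$ is left $R$-free on $\bigcup_m\{\overline{x^\alpha}:|\alpha|=m\}$. Setting $z_i:=\overline{x_i}\in F_1/F_0$ and using that the graded product of the classes of $a\in F_m$ and $b\in F_k$ is the class of $ab$ in degree $m+k$, together with the fact that each standard monomial $x^\alpha=x_1^{\alpha_1}\cdots x_n^{\alpha_n}$ has degree exactly $|\alpha|$, I obtain $z_1^{\alpha_1}\cdots z_n^{\alpha_n}=\overline{x^\alpha}$ for all $\alpha$. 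Hence $\mathrm{Mon}(Gr(A))=\{z^\alpha:\alpha\in\mathbb N^n\}$ is a left $R$-basis of $Gr(A)$, which is condition (ii), and condition (i) holds because $R=F_0/F_{-1}$ is the degree-$0$ component.

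To read off the commutation rules, Proposition \ref{sigmadefinition} gives $x_ir=\sigma_i(r)x_i+\delta_i(r)$ with $\delta_i(r)\in F_0$, so passing to $F_1/F_0$ yields $z_ir=\sigma_i(r)z_i$, and $\sigma_i(r)\ne 0$ for $r\ne 0$ by injectivity of $\sigma_i$; this is condition (iii$'$). Definition \ref{gpbwextension}(iv) gives $x_jx_i=c_{i,j}x_ix_j+h$ with $h\in R+Rx_1+\cdots+Rx_n=F_1$, so passing to $F_2/F_1$ yields $z_jz_i=c_{i,j}z_iz_j$ with $c_{i,j}\ne 0$; this is condition (iv$'$). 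Therefore $Gr(A)$ is a quasi-commutative skew $PBW$ extension of $R$. For the final claim, by the uniqueness statements in Proposition \ref{sigmadefinition} and Theorem \ref{coefficientes} applied to $Gr(A)$ together with Remark \ref{identities}(i), the endomorphisms attached to $Gr(A)$ are exactly the $\sigma_i$ and its structure constants are exactly the $c_{\alpha,\beta}$ of $A$; so if $A$ is bijective, then each $\sigma_i$ is bijective and each $c_{i,j}$ with $i<j$ is invertible, i.e.\ $Gr(A)$ is bijective.

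I expect the bookkeeping above to be entirely routine; the one spot deserving care is the identification $z^\alpha=\overline{x^\alpha}$ and the consequent freeness of $Gr(A)$ on $\{z^\alpha\}_{\alpha\in\mathbb N^n}$ — that is, checking that forming the associated graded ring does not collapse the monomial basis. This rests precisely on standard monomials retaining their degree and on the left-invertibility of the $c_{\alpha,\beta}$ supplied by Theorem \ref{coefficientes}, which rules out cancellation of leading terms.
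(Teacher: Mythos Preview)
Your argument is correct. The paper itself does not give a proof of this theorem; it simply cites \cite{lezamareyes1}. What you have written is a self-contained verification of all the axioms, and each step checks out: the multiplicativity $F_mF_k\subseteq F_{m+k}$ follows exactly as you say from the degree estimates in Theorem \ref{coefficientes}, the identification $z^\alpha=\overline{x^\alpha}$ is immediate because the product $x_1^{\alpha_1}\cdots x_n^{\alpha_n}$ taken in increasing order of the indices is already the standard monomial $x^\alpha$, and the relations (iii$'$), (iv$'$) drop out by passing the relations of Definition \ref{gpbwextension} to the appropriate graded piece. One minor remark: for the particular product $z_1^{\alpha_1}\cdots z_n^{\alpha_n}$ you do not actually need the left-invertibility of the $c_{\alpha,\beta}$, since multiplying the $x_i$ in increasing order already yields $x^\alpha$ on the nose; that invertibility would only be relevant if you were reordering factors. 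So your closing caveat is slightly overcautious, but not wrong.
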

\begin{proof}
See \cite{lezamareyes1}.
\end{proof}

\begin{theorem}[Hilbert Basis Theorem]\label{1.3.4}
Let $A$ be a bijective skew $PBW$ extension of $R$. If $R$ is a left $($right$)$ Noetherian ring
then $A$ is also a left $($right$)$ Noetherian ring.
\end{theorem}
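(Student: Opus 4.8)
The plan is to reduce the claim to the classical skew Hilbert basis theorem for skew polynomial rings of automorphism type, by passing to the associated graded ring. First I would use Theorem \ref{1.3.2}: since $A$ is bijective, the maps of (\ref{eq1.3.1a}) make $A$ into a positively filtered ring with $F_0=R$, $\bigcup_{m\ge 0}F_m=A$, $F_iF_j\subseteq F_{i+j}$, and the associated graded ring $Gr(A)$ is a quasi-commutative bijective skew $PBW$ extension of $R$. By Theorem \ref{1.3.3} we then have a ring isomorphism $Gr(A)\cong R[z_1;\theta_1]\cdots[z_n;\theta_n]$, an iterated skew polynomial ring of endomorphism type in which every $\theta_i$ is bijective, i.e.\ an automorphism.

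Second, I would show $Gr(A)$ is left (right) Noetherian by induction on $n$. The base case is the hypothesis on $R$. For the inductive step I would invoke the classical fact that if $S$ is a left (right) Noetherian ring and $\theta$ is an automorphism of $S$, then $S[z;\theta]$ is left (right) Noetherian; applying this successively to $S_0=R$ and $S_{k+1}=S_k[z_{k+1};\theta_{k+1}]$ yields that $Gr(A)=S_n$ is left (right) Noetherian. The automorphism hypothesis is essential at this point and is exactly what bijectivity of $A$ supplies.

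Third, I would transfer Noetherianity from $Gr(A)$ back to $A$. Given a left ideal $I\subseteq A$, set $F_{-1}:=0$ and form the graded left ideal $Gr(I):=\bigoplus_{m\ge 0}(I\cap F_m)/(I\cap F_{m-1})\subseteq Gr(A)$. Since $Gr(A)$ is left Noetherian, $Gr(I)$ is finitely generated; pick homogeneous generators and lift them to elements $f_1,\dots,f_s\in I$. A routine induction on the filtration degree of an arbitrary element of $I$ — at each stage subtracting a suitable left $A$-combination of the $f_i$ to lower the degree, which is possible because the leading symbols of the $f_i$ generate $Gr(I)$ — shows that $I=Af_1+\cdots+Af_s$. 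Hence $A$ is left Noetherian, and the same argument on the right (using that the filtration of (\ref{eq1.3.1a}) is two-sided and that $Gr(A)$ is also right Noetherian by the previous step) gives the right Noetherian statement.

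The step I expect to be the main obstacle is making the two imported ingredients fit precisely: checking that the bijectivity of $A$ forces each $\theta_i$ occurring in $Gr(A)$ to be a genuine automorphism (so that the skew Hilbert basis theorem applies with no derivation term and no loss), and checking that the filtration of (\ref{eq1.3.1a}) satisfies exactly the hypotheses under which the filtered-to-graded Noetherian transfer is valid (exhaustive, indexed by $\mathbb{N}$, with $Gr$ formed in the usual way). Once these compatibility points are settled, the degree-induction lifting argument in the third step is standard.
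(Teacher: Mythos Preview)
The paper does not actually prove this theorem here: its entire proof reads ``See \cite{lezamareyes1}.'' Your outline is correct and is precisely the natural argument---pass to $Gr(A)$ via Theorem~\ref{1.3.2}, identify $Gr(A)$ with an iterated Ore extension of automorphism type via Theorem~\ref{1.3.3}, apply the classical skew Hilbert basis theorem $n$ times, and lift Noetherianity back through the exhaustive $\mathbb{N}$-filtration by the standard symbol-lifting induction. Since Theorems~\ref{1.3.2} and~\ref{1.3.3} are themselves quoted from \cite{lezamareyes1}, this is almost certainly the argument given there, so your approach coincides with the intended one; the compatibility checks you flag (bijectivity of each $\theta_i$, and the filtration being positive and exhaustive with $F_0=R$) are exactly the content of the ``bijective'' clauses in those two theorems and of (\ref{eq1.3.1a}), so they require no additional work.
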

\begin{proof}
 See \cite{lezamareyes1}.
\end{proof}

\section{Invariant ideals}

Let $A=\sigma(R)\langle x_1,\dots,x_n\rangle$ be a skew $PBW$ extension of a ring $R$. By
Proposition \ref{sigmadefinition}, we know that $x_ir-\sigma_i(r)x_i=\delta_i(r)$ for all $r\in R$,
where $\sigma_i$ is an injective endomorphism of $R$ and $\delta_i$ is a $\sigma_i$-derivation of
$R$, $1\leq i\leq n$. This motivates the following general definition.

\begin{definition}\label{1.4.1}
Let $R$ be a ring and $(\Sigma,\Delta)$ a system of endomorphisms and $\Sigma$-derivations of $R$,
with $\Sigma:=\{\sigma_1,\dots,\sigma_n\}$ and $\Delta:=\{\delta_1,\dots,\delta_n\}$.
\begin{enumerate}
\item[\rm (i)]If $I$ is an ideal of $R$, $I$ is called $\Sigma$-invariant if
$\sigma_i(I)\subseteq I$, for every $1\leq i\leq n$. In a similar way are defined the
$\Delta$-invariant ideals. If $I$ is both $\Sigma$ and $\Delta$-invariant, we say that $I$ is
$(\Sigma,\Delta)$-invariant.
\item[\rm (ii)]A proper $\Sigma$- invariant ideal $I$ of $R$ is
$\Sigma$-prime if whenever a product of two $\Sigma$-invariant ideals is contained in $I$, one of
the ideals is contained in $I$. $R$ is a $\Sigma$-prime ring if the ideal $0$ is $\Sigma$-prime. In
a similar way are defined the $\Delta$-prime and $(\Sigma,\Delta)$-prime ideals and rings.
\item[\rm (iii)]The system $\Sigma$ is commutative if $\sigma_i\sigma_j=\sigma_j\sigma_i$ for
every $1\leq i\leq n$. In a similar way is defined the commutativity for $\Delta$. The system
$(\Sigma,\Delta)$ is commutative if both $\Sigma$ and $\Delta$ are commutative.
\end{enumerate}
\end{definition}

The following proposition describe the behavior of these properties when we pass to a quotient
ring.

\begin{proposition}\label{1.4.2b}
Let $R$ be a ring, $(\Sigma,\Delta)$ a system of endomorphisms and $\Sigma$-derivations of $R$, $I$
a proper ideal of $R$ and $\overline{R}:=R/I$.
\begin{enumerate}
\item[\rm (i)]If $I$ is $(\Sigma,\Delta)$-invariant, then over $\overline{R}:=R/I$ is induced a system
$(\overline{\Sigma},\overline{\Delta})$ of endomorphisms and $\overline{\Sigma}$-\-de\-ri\-vations
defined by $\overline{\sigma_i}(\overline{r}):=\overline{\sigma_i(r)}$ and
$\overline{\delta_i}(\overline{r}):=\overline{\delta_i(r)}$, $1\leq i\leq n$. If $\sigma_i$ is
bijective and $\sigma_i(I)=I$, then $\overline{\sigma_i}$ is bijective.
\item[\rm (ii)]Let $I$ be $\Sigma$-invariant. If $\Sigma$ is commutative, then $\overline{\Sigma}$ is commutative. Similar
properties are valid for $\Delta$ and $(\Sigma,\Delta)$.
\item[\rm (iii)]Let $I$ be $\Sigma$-invariant. $I$ is $\Sigma$-prime if and only if $\overline{R}$ is $\overline{\Sigma}$-prime. Similar
properties are valid for $\Delta$ and $(\Sigma,\Delta)$.
\end{enumerate}
\end{proposition}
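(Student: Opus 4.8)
The plan is to verify the three items by routine diagram-chasing with the canonical surjection $\pi\colon R\to\overline{R}=R/I$, using throughout that $\pi$ is a ring homomorphism and that $\sigma_i$ (resp.\ $\delta_i$) sends $I$ into $I$ under the appropriate invariance hypothesis.

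For (i), I would first check that $\overline{\sigma_i}$ and $\overline{\delta_i}$ are well defined: if $r-r'\in I$ then $\sigma_i(r)-\sigma_i(r')=\sigma_i(r-r')\in\sigma_i(I)\subseteq I$ and likewise $\delta_i(r)-\delta_i(r')\in\delta_i(I)\subseteq I$, using that $I$ is $(\Sigma,\Delta)$-invariant. That $\overline{\sigma_i}$ is a ring endomorphism and $\overline{\delta_i}$ a $\overline{\sigma_i}$-derivation then follows by applying $\pi$ to the identities $\sigma_i(rs)=\sigma_i(r)\sigma_i(s)$, $\sigma_i(r+s)=\sigma_i(r)+\sigma_i(s)$, $\delta_i(r+s)=\delta_i(r)+\delta_i(s)$ and $\delta_i(rs)=\sigma_i(r)\delta_i(s)+\delta_i(r)s$ and using surjectivity of $\pi$. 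For the last claim, suppose $\sigma_i$ is bijective with $\sigma_i(I)=I$. Surjectivity of $\overline{\sigma_i}$ is immediate from that of $\sigma_i$; for injectivity, if $\overline{\sigma_i}(\overline{r})=\overline{0}$ then $\sigma_i(r)\in I=\sigma_i(I)$, so $\sigma_i(r)=\sigma_i(r')$ for some $r'\in I$, and injectivity of $\sigma_i$ gives $r=r'\in I$, i.e.\ $\overline{r}=\overline{0}$. Item (ii) is then immediate: the relevant induced maps are defined exactly as in (i), and $\overline{\sigma_i}\,\overline{\sigma_j}(\overline{r})=\overline{\sigma_i(\sigma_j(r))}=\overline{\sigma_j(\sigma_i(r))}=\overline{\sigma_j}\,\overline{\sigma_i}(\overline{r})$; the arguments for $\Delta$ and for $(\Sigma,\Delta)$ are word for word the same.

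For (iii), the key point is that the inclusion-preserving bijection between ideals of $\overline{R}$ and ideals of $R$ containing $I$, namely $\overline{J}\mapsto\pi^{-1}(\overline{J})$ with inverse $J\mapsto J/I$, carries $\overline{\Sigma}$-invariant ideals to $\Sigma$-invariant ideals and back (since $\overline{\sigma_i}(J/I)=\sigma_i(J)/I$), and is compatible with products, $\overline{J_1}\,\overline{J_2}=\overline{J_1J_2}$, so that $\overline{J_1}\,\overline{J_2}=\overline{0}$ exactly when $J_1J_2\subseteq I$. Granting this: if $I$ is $\Sigma$-prime and $\overline{J_1}\,\overline{J_2}=\overline{0}$ with the $\overline{J_k}$ $\overline{\Sigma}$-invariant, lift to $\Sigma$-invariant ideals $J_1,J_2\supseteq I$ with $J_1J_2\subseteq I$; then some $J_k\subseteq I$, i.e.\ $\overline{J_k}=\overline{0}$, and since $I$ is proper $\overline{R}\neq 0$, so $\overline{R}$ is $\overline{\Sigma}$-prime. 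Conversely, if $\overline{R}$ is $\overline{\Sigma}$-prime and $J_1J_2\subseteq I$ with $J_1,J_2$ $\Sigma$-invariant, replace $J_k$ by $J_k+I$, which is still $\Sigma$-invariant, now contains $I$, and satisfies $(J_1+I)(J_2+I)\subseteq J_1J_2+I\subseteq I$; passing to $\overline{R}$ gives $\overline{J_k+I}=\overline{0}$ for some $k$, hence $J_k\subseteq I$, so $I$ is $\Sigma$-prime. The same reasoning with $\Delta$ or $(\Sigma,\Delta)$ in place of $\Sigma$ yields the remaining two equivalences.

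I expect no serious obstacle here; the one place that needs attention is the reduction from $J_k$ to $J_k+I$ in the converse half of (iii), which is needed because Definition \ref{1.4.1}(ii) quantifies over all $\Sigma$-invariant ideals whose product lies in $I$, not only those containing $I$. A minor companion point is that $\overline{\delta_i}$ is a $\overline{\sigma_i}$-derivation only once $I$ is $\Delta$-invariant, which is why (i) requires full $(\Sigma,\Delta)$-invariance whereas (ii) and (iii) require only the invariance matching the system named in the statement.
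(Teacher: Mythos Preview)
Your proposal is correct and follows exactly the approach the paper takes: the paper's entire proof reads ``All statements follow directly from the definitions,'' and you have simply written out those routine verifications in full. The one point you flag as needing care---replacing $J_k$ by $J_k+I$ in the converse half of (iii)---is precisely the only non-automatic step, and you handle it correctly.
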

\begin{proof}
All statements follow directly from the definitions.
\end{proof}

According to the properties of $\Sigma$ and $\Delta$, we need to introduce some special classes of
skew $PBW$ extensions.

\begin{definition}
Let $A$ be a skew $PBW$ extension of a ring $R$ with system of endomorphisms
$\Sigma:=\{\sigma_1,\dots,\sigma_n\}$ and $\Sigma$-derivations
$\Delta:=\{\delta_1,\dots,\delta_n\}$.
\begin{enumerate}
\item[\rm (i)]If $\sigma_i=i_R$ for every $1\leq i\leq n$, we say that $A$ is a skew $PBW$ extension of
derivation type.
\item[\rm (ii)]If $\delta_i=0$ for every $1\leq i\leq n$, we say that $A$ is a skew $PBW$ extension of
endomorphism type. In addition, if every $\sigma_i$ is bijective, $A$ is a skew $PBW$ extension of
automorphism type.
\item[\rm (iii)]$A$ is $\Sigma$-commutative if the system $\Sigma$ is commutative.
In a similar way are defined the $\Delta$ and $(\Sigma,\Delta)$-commutativity of $A$.
\end{enumerate}
\end{definition}

Related with the previous definition, we have the following two interesting results. The second one
extends Lemma 1.5. (c) in \cite{Goodearl2}.

\begin{proposition}
Let $A$ be a skew $PBW$ extension of derivation type of a ring $R$. Then, for any
$\theta,\gamma,\beta\in \mathbb{N}^n$ and $c\in R$, the following identities hold:
\begin{center}
$c_{\gamma,\beta}c_{\theta,\gamma+\beta}=c_{\theta,\gamma}c_{\theta+\gamma,\beta}$,
$cc_{\theta,\gamma}=c_{\theta,\gamma}c$.
\end{center}
In particular, the system of constants $c_{i,j}$ are central.
\end{proposition}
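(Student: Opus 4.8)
The plan is to obtain both identities from the associativity of multiplication in $A$, by computing a suitable product in two ways and then comparing the coefficient of the highest standard monomial that occurs, invoking the uniqueness of representation in Definition \ref{gpbwextension}(ii). Throughout I will use Theorem \ref{coefficientes}: for $x^{\alpha},x^{\beta}\in{\rm Mon}(A)$ and $0\ne r\in R$ one has $x^{\alpha}x^{\beta}=c_{\alpha,\beta}x^{\alpha+\beta}+p_{\alpha,\beta}$ with $\deg p_{\alpha,\beta}<|\alpha+\beta|$, and $x^{\alpha}r=\sigma^{\alpha}(r)x^{\alpha}+p_{\alpha,r}$ with $\deg p_{\alpha,r}<|\alpha|$; since $A$ is of derivation type, $\sigma^{\alpha}=\sigma_1^{\alpha_1}\cdots\sigma_n^{\alpha_n}=i_R$, so the latter reads $x^{\alpha}r=rx^{\alpha}+p_{\alpha,r}$. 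I will also use that $A$ is filtered (Theorem \ref{1.3.2}) with $F_0=R$ and $x_i\in F_1$, whence $\deg(fg)\le\deg f+\deg g$ and $\deg(rf)\le\deg f$ for $r\in R$; this is exactly what guarantees that in the two-way expansions below only the intended ``principal'' term reaches the top degree.

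For the first identity, expand $(x^{\theta}x^{\gamma})x^{\beta}$: applying Theorem \ref{coefficientes}(b) twice, this equals $c_{\theta,\gamma}c_{\theta+\gamma,\beta}\,x^{\theta+\gamma+\beta}$ plus $c_{\theta,\gamma}p_{\theta+\gamma,\beta}$ and $p_{\theta,\gamma}x^{\beta}$, both of degree $<|\theta+\gamma+\beta|$. Expanding instead $x^{\theta}(x^{\gamma}x^{\beta})=x^{\theta}(c_{\gamma,\beta}x^{\gamma+\beta})+x^{\theta}p_{\gamma,\beta}$, one first writes $x^{\theta}c_{\gamma,\beta}=c_{\gamma,\beta}x^{\theta}+p_{\theta,c_{\gamma,\beta}}$ (here the derivation-type hypothesis is used, so that there is no twist by $\sigma^{\theta}$), and then applies $x^{\theta}x^{\gamma+\beta}=c_{\theta,\gamma+\beta}x^{\theta+\gamma+\beta}+p_{\theta,\gamma+\beta}$; the outcome is $c_{\gamma,\beta}c_{\theta,\gamma+\beta}\,x^{\theta+\gamma+\beta}$ plus three terms of degree $<|\theta+\gamma+\beta|$. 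Comparing the coefficients of $x^{\theta+\gamma+\beta}$ gives $c_{\theta,\gamma}c_{\theta+\gamma,\beta}=c_{\gamma,\beta}c_{\theta,\gamma+\beta}$.

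For the second identity (we may assume $c\ne0$, the case $c=0$ being trivial), apply the same device to $x^{\theta}x^{\gamma}c$. On one hand $(x^{\theta}x^{\gamma})c=c_{\theta,\gamma}(x^{\theta+\gamma}c)+p_{\theta,\gamma}c=c_{\theta,\gamma}c\,x^{\theta+\gamma}+(\text{degree}<|\theta+\gamma|)$. On the other hand $x^{\theta}(x^{\gamma}c)=x^{\theta}(cx^{\gamma})+x^{\theta}p_{\gamma,c}=(x^{\theta}c)x^{\gamma}+x^{\theta}p_{\gamma,c}=c\,x^{\theta}x^{\gamma}+(\text{degree}<|\theta+\gamma|)=c\,c_{\theta,\gamma}x^{\theta+\gamma}+(\text{degree}<|\theta+\gamma|)$, where again each passage of a constant past some $x^{\alpha}$ is exact up to lower-degree terms because $\sigma^{\alpha}=i_R$. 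Comparing the coefficients of $x^{\theta+\gamma}$ yields $c_{\theta,\gamma}c=cc_{\theta,\gamma}$. Finally, taking $\theta=e_j$ and $\gamma=e_i$ with $i<j$ (where $e_k$ denotes the $k$-th canonical vector of $\mathbb{N}^n$), one has $c_{e_j,e_i}=c_{i,j}$ by Definition \ref{gpbwextension}(iv), so this special case says precisely that $c_{i,j}$ commutes with every element of $R$; hence the constants $c_{i,j}$ lie in the center of $R$.

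I do not foresee a real obstacle: the argument is the standard one of comparing leading coefficients after expanding an associativity relation, and the only care needed is the degree bookkeeping — checking, in each expansion, that every term other than the product of two $c$'s falls strictly below the top degree — together with invoking $x^{\alpha}r=rx^{\alpha}+(\text{lower order})$ only where the derivation-type hypothesis legitimately removes the twist $\sigma^{\alpha}$. Without that hypothesis the same computation would instead produce the twisted cocycle identity $\sigma^{\theta}(c_{\gamma,\beta})\,c_{\theta,\gamma+\beta}=c_{\theta,\gamma}c_{\theta+\gamma,\beta}$, which explains why the statement is confined to extensions of derivation type.
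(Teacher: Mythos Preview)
Your argument is correct. You expand $(x^{\theta}x^{\gamma})x^{\beta}=x^{\theta}(x^{\gamma}x^{\beta})$ and $(x^{\theta}x^{\gamma})c=x^{\theta}(x^{\gamma}c)$ using Theorem~\ref{coefficientes}, and read off the two identities by comparing the coefficient of the unique top-degree standard monomial; this is legitimate because every remaining term has strictly smaller degree (your appeal to the filtration of Theorem~\ref{1.3.2} handles the products $p_{\theta,\gamma}x^{\beta}$, $x^{\theta}p_{\gamma,\beta}$, etc.) and the standard monomials are an $R$-basis. The derivation-type hypothesis enters exactly where you indicate, replacing $\sigma^{\alpha}(r)$ by $r$ when a constant is moved past $x^{\alpha}$; your closing remark that without this hypothesis one obtains the twisted cocycle identity $\sigma^{\theta}(c_{\gamma,\beta})c_{\theta,\gamma+\beta}=c_{\theta,\gamma}c_{\theta+\gamma,\beta}$ is also accurate. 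The specialization $\theta=e_j$, $\gamma=e_i$ with $i<j$ indeed gives $c_{e_j,e_i}=c_{i,j}$, so the second identity yields centrality of $c_{i,j}$ in $R$.

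The paper's own proof is the single line ``This is a direct consequence of Remark~\ref{identities}, part~(ii)''. Taken literally, that part of the remark only records that $c_{\alpha,\beta}$ is invertible when $A$ is bijective, which by itself does not yield the identities in question; presumably the authors intend the reader to supply exactly the associativity/leading-coefficient computation that you have written out in full. So your approach is the natural argument behind the paper's one-line citation, and your proof is complete as it stands.
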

\begin{proof}
This is a direct consequence of Remark \ref{identities}, part (ii).
\end{proof}

\begin{proposition}\label{1.4.9}
Let $A$ be a skew $PBW$ extension of a ring $R$. If for every $1\leq i\leq n$, $\delta_i$ is inner,
then $A$ is a skew $PBW$ extension of $R$ of endomorphism type.
\end{proposition}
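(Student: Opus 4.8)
The plan is to absorb each $\delta_i$ into a shift of the generator $x_i$. By hypothesis each $\delta_i$ is an inner $\sigma_i$-derivation, so there is an element $a_i\in R$ with $\delta_i(r)=a_ir-\sigma_i(r)a_i$ for all $r\in R$. Setting $y_i:=x_i-a_i$, $1\le i\le n$, a one-line computation gives, for every $r\in R$,
\[
y_ir=x_ir-a_ir=\sigma_i(r)x_i+\delta_i(r)-a_ir=\sigma_i(r)x_i-\sigma_i(r)a_i=\sigma_i(r)y_i,
\]
so the new generators commute with the coefficients with no derivation term. The goal is then to verify that $A=\sigma(R)\langle y_1,\dots,y_n\rangle$, i.e.\ that Definition \ref{gpbwextension} holds with $\{x_1,\dots,x_n\}$ replaced by $\{y_1,\dots,y_n\}$; the displayed identity will then pin down the associated $\sigma$-derivations as zero.

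First I would show that $A$ is left $R$-free on $\{y^{\alpha}:=y_1^{\alpha_1}\cdots y_n^{\alpha_n}\mid\alpha\in\mathbb{N}^n\}$. Expanding $y^{\alpha}$ and rewriting each resulting word in the standard form supplied by Theorem \ref{coefficientes}, one obtains $y^{\alpha}=x^{\alpha}+q_{\alpha}$ with $q_{\alpha}=0$ or $\deg(q_{\alpha})<|\alpha|$: every summand of the expansion other than $x^{\alpha}$ carries at least one scalar factor $a_k$ and hence, after normalization, has degree strictly below $|\alpha|$. Thus the passage from $\{x^{\alpha}\}$ to $\{y^{\alpha}\}$ is unitriangular with respect to the filtration of Theorem \ref{1.3.2}, and comparing top-degree components shows simultaneously that the $y^{\alpha}$ generate $A$ over $R$ and are left $R$-linearly independent, so they form a basis. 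This gives condition (ii) of Definition \ref{gpbwextension}; condition (i) is untouched; and condition (iii) is immediate from the displayed relation, since $\sigma_i(r)\ne0$ for $r\ne0$ by injectivity of $\sigma_i$ (in fact we get the sharper relation $y_ir=\sigma_i(r)y_i$, with no remainder).

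For condition (iv) I would start from $x_jx_i-c_{i,j}x_ix_j\in R+Rx_1+\cdots+Rx_n$, substitute $x_k=y_k+a_k$ everywhere, and use $y_ka_{\ell}=\sigma_k(a_{\ell})y_k$ together with $x_ja_i=\sigma_j(a_i)x_j+\delta_j(a_i)$ to move all the $a_k$'s to the left and collect terms; each correction produced this way lands in $R+Ry_1+\cdots+Ry_n$, and the leading constant is the same $c_{i,j}\in R-\{0\}$. Once $A=\sigma(R)\langle y_1,\dots,y_n\rangle$ is established, Proposition \ref{sigmadefinition} attaches to the generators $y_i$ endomorphisms and $\sigma$-derivations $(\sigma_i',\delta_i')$ characterized by $y_ir=\sigma_i'(r)y_i+\delta_i'(r)$; comparing with $y_ir=\sigma_i(r)y_i$ and using uniqueness of coordinates in the basis $\{y^{\alpha}\}$ forces $\sigma_i'=\sigma_i$ and $\delta_i'=0$ for every $i$, so $A$ is of endomorphism type.

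The step I expect to be the main obstacle is the bookkeeping in the last two paragraphs: one has to make sure that rewriting words in the $y_i$ into the standard form of Theorem \ref{coefficientes} never raises the degree and always leaves the scalar coefficients on the left, so that both the basis claim and the membership in $R+Ry_1+\cdots+Ry_n$ genuinely hold. The degree filtration of Theorem \ref{1.3.2} and the coefficient rules of Theorem \ref{coefficientes} are exactly what controls this; everything else is a routine substitution.
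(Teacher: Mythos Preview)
Your proposal is correct and follows essentially the same route as the paper: shift each generator by the element implementing the inner derivation, i.e.\ set $y_i=x_i-a_i$, verify $y_ir=\sigma_i(r)y_i$, check condition (iv) by substituting $x_k=y_k+a_k$, and establish that the $y^{\alpha}$ form a left $R$-basis via the unitriangular passage $y^{\alpha}=x^{\alpha}+(\text{lower degree})$. The paper argues the basis property using the deglex order of Remark \ref{identities} rather than explicitly invoking the filtration of Theorem \ref{1.3.2}, but this is the same triangularity observation phrased slightly differently.
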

\begin{proof}
Let $a_i\in R$ such that $\delta_i=\delta_{a_i}$ is inner, $1\leq i\leq n$. We will prove that
$A=\sigma(R)\langle z_1,\dots,z_n\rangle$, where $z_i:=x_i-a_i$, the system of endomorphisms
coincides with the original system $\Sigma$ and every $\sigma_i$-derivation is equal zero. We will
check the conditions in Definition \ref{gpbwextension}. It is clear that $R\subseteq A$. Let $r\in
R$, then
$z_ir=(x_i-a_i)r=x_ir-a_ir=\sigma_i(r)x_i+\delta_{a_i}(r)=\sigma_i(r)x_i+a_ir-\sigma_i(r)a_i-a_ir=\sigma_i(r)(x_i-a_i)=\sigma_i(r)z_i$.
Thus, the systems of constants $c_{i,r}$ of $\sigma(R)\langle z_1,\dots,z_n\rangle$ coincides with
the original one, and the same is true for the system of endomorphisms. Note that the system of
$\Sigma$-derivations is trivial, i.e., each one is equal zero. This means that $\sigma(R)\langle
z_1,\dots,z_n\rangle$ is of endomorphism type.

$z_jz_i=(x_j-a_j)(x_i-a_i)=x_jx_i-x_ja_i-a_jx_i+a_ja_i=c_{ij}x_ix_j+r_0+r_1x_1+\cdots+r_nx_n-x_ja_i-a_jx_i+a_ja_i$,
for some $r_0,r_1,\dots,r_n\in R$. Replacing $x_i$ by $z_i+a_i$  for every $1\leq i\leq n$, we
conclude that $z_jz_i-c_{ij}z_iz_j\in R+Rz_1+\cdots+Rz_n$.

Finally, note that $Mon\{z_1,\dots,z_n\}:=\{z^{\alpha}=z_1^{\alpha_1}\cdots
z_n^{\alpha_n}|\alpha=(\alpha_1,\dots ,\alpha_n)\in \mathbb{N}^n\}$ is a left $R$-basis of $A$. In
fact, it is clear that $Mon\{z_1,\dots,z_n\}$ generates $A$ as left $R$-module. Let $c_1,\dots,
c_t\in R$ such that $c_1z^{\alpha_1}+\cdots+c_tz^{\alpha_t}=0$ with $z^{\alpha_i}\in
Mon\{z_1,\dots,z_n\}$, $1\leq i\leq n$. Then, using the deglex order in Remark \ref{identities}, we
conclude that $c_1x^{\alpha_1}+\cdots+c_tx^{\alpha_t}$ should be zero, whence $c_1=\cdots=c_t=0$.
\end{proof}

In the next proposition we study quotients of skew $PBW$ extensions by $(\Sigma,\Delta)$-invariant
ideals.

\begin{proposition}\label{1.4.2a}
Let $A$ be a skew $PBW$ extension of a ring $R$ and $I$ a $(\Sigma,\Delta)$-invariant ideal of $R$.
Then,
\begin{enumerate}
\item[\rm (i)]$IA$ is an ideal of $A$ and $IA\cap
R=I$. $IA$ is proper if and only if $I$ is proper. Moreover, if for every $1\leq i\leq n$,
$\sigma_i$ is bijective and $\sigma_i(I)=I$, then $IA=AI$.
\item[\rm (ii)]If $I$ is proper and $\sigma_i(I)=I$ for every $1\leq i\leq n$, then $A/IA$ is a skew $PBW$ extension of
$R/I$. Moreover, if $A$ is of automorphism type, then $A/IA$ is of automorphism type. If $A$ is
bijective, then $A/IA$ is bijective. In addition, if $A$ is $\Sigma$-commutative, then $A/IA$ is
$\overline{\Sigma}$-commutative. Similar properties are true for the $\overline{\Delta}$ and
$(\overline{\Sigma},\overline{\Delta})$ commutativity.
\item[\rm (iii)]Let $A$ be of derivation type and $I$ proper. Then, $IA=AI$ and $A/IA$ is a skew $PBW$ extension of derivation type of
$R/I$.
\item[\rm (iv)]Let $R$ be left $($right$)$ Noetherian and $\sigma_i$ bijective for every $1\leq i\leq n$. Then, $\sigma_i(I)=I$ for every $i$ and $IA=AI$.
If $I$ is proper and $A$ is bijective, then $A/IA$ is a bijective skew $PBW$ extension of $R/I$.
\end{enumerate}
\end{proposition}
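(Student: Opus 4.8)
The plan is to verify each of the four parts by reducing to facts already established. For part (i), I would first check that $IA$ is a two-sided ideal: it is clearly a left ideal since $A$ is a left $R$-module with basis $\mathrm{Mon}(A)$, and closure under right multiplication follows from the commutation rule $x_i r = \sigma_i(r)x_i + \delta_i(r)$ together with $\Sigma$- and $\Delta$-invariance of $I$; more precisely, by Theorem \ref{coefficientes}(a), $x^\alpha r = \sigma^\alpha(r)x^\alpha + p_{\alpha,r}$, and an induction on $|\alpha|$ shows $p_{\alpha,r}\in IA$ whenever $r\in I$, using that $\sigma^\alpha(I)\subseteq I$ and that the lower-degree terms produced involve only applications of the $\sigma_i$ and $\delta_i$ to elements of $I$. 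The identity $IA\cap R=I$ follows from uniqueness of the representation $f=\sum c_i x^{\alpha_i}$ in Remark \ref{identities}(iii): an element of $IA$ has all its coefficients in $I$, so if it lies in $R$ it equals its constant term, which is in $I$; conversely $I\subseteq IA\cap R$ trivially. Hence $IA$ is proper iff $1\notin IA$ iff $1\notin I$ iff $I$ is proper. For the last claim, if each $\sigma_i$ is bijective with $\sigma_i(I)=I$, then $r x_i = x_i\sigma_i^{-1}(r) - \delta_i(\sigma_i^{-1}(r))$ shows $IA\supseteq AI$ and symmetrically $AI\supseteq IA$ after the degree induction, giving $IA=AI$.

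For part (ii), assuming $I$ proper and $\sigma_i(I)=I$, I would show $A/IA\cong \sigma(\overline R)\langle \overline{x_1},\dots,\overline{x_n}\rangle$ where $\overline R=R/I$ carries the induced system $(\overline\Sigma,\overline\Delta)$ of Proposition \ref{1.4.2b}(i). The key point is that $A/IA$ is a free left $\overline R$-module on the images $\overline{x^\alpha}$: the images span, and a dependence $\sum \overline{c_i}\,\overline{x^{\alpha_i}}=0$ means $\sum c_i x^{\alpha_i}\in IA$, forcing each $c_i\in I$ by the coefficient-uniqueness just used. Conditions (iii) and (iv) of Definition \ref{gpbwextension} for $A/IA$ are inherited by applying the bar map to the corresponding relations in $A$, with $\overline{c_{i,r}}=\overline{\sigma_i(r)\cdots}$ etc. The preservation of automorphism type follows from the bijectivity clause in Proposition \ref{1.4.2b}(i); bijectivity is preserved because the constants $c_{i,j}$ of $A/IA$ are the images $\overline{c_{i,j}}$, and the image of an invertible element is invertible in the quotient. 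The commutativity clauses reduce directly to Proposition \ref{1.4.2b}(ii).

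Part (iii) is the special case of (ii) in which every $\sigma_i=i_R$, so automatically $\sigma_i(I)=I$, hence $IA=AI$ by part (i) and $A/IA$ is a skew $PBW$ extension of $R/I$ with all induced $\overline{\sigma_i}=i_{\overline R}$, i.e.\ of derivation type. Part (iv) is the case where $R$ is left (or right) Noetherian and each $\sigma_i$ is bijective: here $\sigma_i(I)=I$ holds automatically because for an injective endomorphism of a Noetherian ring the ascending chain $I\subseteq \sigma_i^{-1}(I)\subseteq \sigma_i^{-2}(I)\subseteq\cdots$ (or equivalently the chain $\sigma_i(I)\subseteq \sigma_i^2(I)\subseteq\cdots$, noting $\sigma_i$ bijective) must stabilize, which together with injectivity forces $\sigma_i(I)=I$; then $IA=AI$ by part (i), and if $I$ is proper and $A$ bijective, part (ii) gives that $A/IA$ is a bijective skew $PBW$ extension of $R/I$ (using Theorem \ref{1.3.4} only implicitly, in that $R/I$ remains Noetherian, which is not even needed for the conclusion stated).

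I expect the main obstacle to be the degree induction underlying part (i): proving carefully that $x^\alpha I\subseteq IA$ and, in the bijective case, that $I x^\alpha \subseteq IA$, since this requires controlling the lower-order "tail" terms $p_{\alpha,r}$ and $p_{\alpha,\beta}$ from Theorem \ref{coefficientes} and checking inductively that they stay inside $IA$; once that is done, everything else is a routine transfer of the defining relations across the quotient map, and the Noetherian argument in (iv) is the only other place where a genuine (though standard) idea enters.
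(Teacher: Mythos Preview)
Your plan is correct and follows the paper's proof essentially step for step: two-sidedness of $IA$ via $(\Sigma,\Delta)$-invariance, verification of Definition~\ref{gpbwextension} in the quotient using coefficient-uniqueness for independence of the $\overline{x^\alpha}$, and the ascending chain $I\subseteq\sigma_i^{-1}(I)\subseteq\cdots$ for part~(iv).

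Two small points to tighten. In~(i) you have the sides reversed: $IA$ is \emph{trivially} a right ideal (since $(IA)A=I(AA)=IA$), and it is closure under \emph{left} multiplication by $A$ that requires your computation $x^\alpha r=\sigma^\alpha(r)x^\alpha+p_{\alpha,r}\in IA$ for $r\in I$; similarly, your identity $rx_i=x_i\sigma_i^{-1}(r)-\delta_i(\sigma_i^{-1}(r))$ actually yields $IA\subseteq AI$, not the direction you state. The mathematics is right, only the labels are swapped. In~(ii) you should not skip the one place where the hypothesis $\sigma_i(I)=I$ (and not merely $\sigma_i(I)\subseteq I$) is used: verifying condition~(iii) of Definition~\ref{gpbwextension} for $A/IA$ requires $\overline r\neq\overline 0\Rightarrow\overline{\sigma_i(r)}\neq\overline 0$, i.e.\ $\sigma_i(r)\in I\Rightarrow r\in I$, which needs $\sigma_i(I)=I$ together with injectivity of $\sigma_i$. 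The paper makes this check explicit.
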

\begin{proof}
(i) It is clear that $IA$ is a right ideal, but since $I$ is $(\Sigma,\Delta)$-invariant, then $IA$
is also a left ideal of $A$. It is obvious that $IA\cap R=I$. From this last equality we get also
that $IA$ is proper if and only if $I$ is proper. Using again that $I$ is
$(\Sigma,\Delta)$-invariant, we get that $AI\subseteq IA$. Assuming that $\sigma_i$ is bijective
and $\sigma_i(I)=I$ for every $i$, then $IA\subseteq AI$.

(ii) According to (i), we only have to show that $\overline{A}:=A/IA$ is a skew $PBW$ extension of
$\overline{R}:=R/I$. For this we will verify the four conditions of Definition \ref{gpbwextension}.
It is clear that $\overline{R}\subseteq \overline{A}$. Moreover, $\overline{A}$ is a left
$\overline{R}$-module with generating set $Mon\{\overline{x_1},\dots,\overline{x_n}\}$. Next we
show that $Mon\{\overline{x_1},\dots,\overline{x_n}\}$ is independent. Consider the expression
$\overline{r_1}\ \overline{X_1}+ \dotsb + \overline{r_n}\overline{X_n}=\overline{0}$, where $X_i\in
{\rm Mon}(A)$ for each $i$. We have $r_1X_1+\dotsb + r_nX_n\in IA$ and hence
\[
r_1X_1+\dotsb + r_nX_n=r_1'X_1+\dotsb +r_n'X_n,\ \ \text{for some }\ r_i'\in I,\ i=1,\dotsc,n.
\]
Thus, $(r_1-r_1')X_1+\dotsb + (r_n-r_n')X_n=0$, so $r_i\in I$, i.e., $\overline{r_i}=\overline{0}$
for $i=1,\dotsc, n$.

Let $\overline{r}\neq \overline{0}$ with $r\in R$. Then $r\notin IA$, and hence, $r\notin I$, in
particular, $r\neq 0$ and there exists $c_{i,r}:=\sigma_i(r)\neq 0$ such that
$x_ir=c_{i,r}x_i+\delta_i(r)$. Thus,
$\overline{x_i}\,\overline{r}=\overline{c_{i,r}}\,\overline{x_i}+\overline{\delta_i(r)}$. Observe
that $\overline{c_{i,r}}\neq \overline{0}$, contrary $c_{i,r}=\sigma_i(r)\in IA\cap
R=I=\sigma_i(I)$, i.e, $r\in I$, a contradiction. This completes the proof of condition (iii) in
Definition \ref{gpbwextension}.

In $A$ we have $x_jx_i-c_{i,j}x_ix_j\in R+\sum_{t=1}^n Rx_t$, with $c_{i,j}\in R-\{0\}$, so in
$\overline{A}$ we get that $\overline{x_j}\, \overline{x_i}-\overline{c_{i,j}}\, \overline{x_i}\,
\overline{x_j}\in \overline{R}+\sum_{t=1}^n \overline{R}\, \overline{x_t}$. Since $I$ is proper and
$c_{i,j}$ is left invertible for $i<j$ and right invertible for $i>j$, then $\overline{c_{i,j}}\neq
\overline{0}$. This completes the proof of condition (iv) in Definition \ref{gpbwextension}.

By Proposition \ref{1.4.2b}, if $\sigma_i$ is bijective, then $\overline{\sigma_i}$ is bijective.
It is obvious that if every constant $c_{ij}$ is invertible, then $\overline{c_{ij}}$ is
invertible..

The statements about the commutativity follow from Proposition \ref{1.4.2b}.

(iii) This is direct consequence of (i) and (ii).

(iv) Considering the Noether condition and the ascending chain $I\subseteq
\sigma_{i}^{-1}(I)\subseteq \sigma_{i}^{-2}(I)\subseteq \cdots$ we get that $\sigma_i(I)=I$ for
every $i$. The rest follows from (i) and (ii).
\end{proof}

\section{Extensions of derivation type}

Now we pass to describe the prime ideals of skew $PBW$ extensions of derivation type. Two technical
propositions are needed first. The total order introduced in Remark \ref{identities} will be used
in what follows.

\begin{proposition}\label{Terminator}
Let $A$ be a skew $PBW$ extension of a ring $R$ such that $\sigma_i$ is bijective for every $1\leq
i\leq n $. Let $J$ be a nonzero ideal of $A$. If $f$ is a nonzero element of $J$ of minimal leader
monomial $x^{\alpha_t}$ and $\sigma^{\alpha_t}(r)=r$ for any $r\in {\rm rann}_R({\rm lc}(f))$, then
${\rm rann}_A(f)=({\rm rann}_R({\rm lc}(f)))A$.
\end{proposition}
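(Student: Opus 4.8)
The inclusion $(\mathrm{rann}_R(\mathrm{lc}(f)))A \subseteq \mathrm{rann}_A(f)$ is the easy direction and I would dispatch it first: if $r \in \mathrm{rann}_R(\mathrm{lc}(f))$, then using the hypothesis $\sigma^{\alpha_t}(r)=r$ together with formula \eqref{611} one checks that $fr$ has leader term $\mathrm{lc}(f)\sigma^{\alpha_t}(r)x^{\alpha_t}=\mathrm{lc}(f)r\,x^{\alpha_t}=0$; but this only kills the top term, so one actually needs more. The cleaner route: since $f\in J$ and $J$ is an ideal, $fr\in J$, and $fr$ has leader monomial strictly smaller than $x^{\alpha_t}$ (the top term vanishes), so by minimality of $x^{\alpha_t}$ we must have $fr=0$. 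Hence $rR\subseteq\mathrm{rann}_A(f)$ trivially extends to $(\mathrm{rann}_R(\mathrm{lc}(f)))A\subseteq\mathrm{rann}_A(f)$ once we know $\mathrm{rann}_A(f)$ is a right ideal containing $\mathrm{rann}_R(\mathrm{lc}(f))$ — wait, that is automatic. So this direction reduces to: $r\in\mathrm{rann}_R(\mathrm{lc}(f))\Rightarrow fr=0$, via the minimal-leader-monomial trick.

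\medskip

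\textbf{The hard direction.} Now suppose $g\in\mathrm{rann}_A(f)$, i.e.\ $fg=0$, and write $g=c_1x^{\beta_1}+\cdots+c_sx^{\beta_s}$ with $x^{\beta_1}\succ\cdots\succ x^{\beta_s}$. I want to show every $c_k$ lies in $\mathrm{rann}_R(\mathrm{lc}(f))$ — actually I want $g\in(\mathrm{rann}_R(\mathrm{lc}(f)))A$, which is a bit stronger than each coefficient being in the right annihilator of $\mathrm{lc}(f)$ only if that right annihilator is not itself an ideal, but $\mathrm{rann}_R(\mathrm{lc}(f))$ \emph{is} a right ideal of $R$, and $(\mathrm{rann}_R(\mathrm{lc}(f)))A=\{\,$polynomials with all coefficients in $\mathrm{rann}_R(\mathrm{lc}(f))\,\}$ since $\mathrm{rann}_R(\mathrm{lc}(f))$ is a left ideal too (being a two-sided annihilator? no — right annihilator of a single element is a right ideal only). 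Here I must be careful: $\mathrm{rann}_R(a)$ is a \emph{right} ideal of $R$, so $(\mathrm{rann}_R(a))A$ need not have all coefficients in $\mathrm{rann}_R(a)$. I think the intended claim uses that in \eqref{611} the relevant products stay inside the right ideal, so I would prove directly by induction on $lm(g)$ that $fg=0$ forces $g\in(\mathrm{rann}_R(\mathrm{lc}(f)))A$. Compute the leader term of $fg$: it is $\mathrm{lc}(f)\,\sigma^{\alpha_t}(c_1)\,x^{\alpha_t+\beta_1}$ up to a left-invertible factor $c_{\alpha_t,\beta_1}$ (Theorem \ref{coefficientes}(b)). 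Since $fg=0$, this leader term vanishes, so $\mathrm{lc}(f)\sigma^{\alpha_t}(c_1)=0$, i.e.\ $\sigma^{\alpha_t}(c_1)\in\mathrm{rann}_R(\mathrm{lc}(f))$. Now apply $(\sigma^{\alpha_t})^{-1}$ (here I use that each $\sigma_i$, hence $\sigma^{\alpha_t}$, is bijective): $c_1\in(\sigma^{\alpha_t})^{-1}(\mathrm{rann}_R(\mathrm{lc}(f)))$. To close the induction I need this set to equal $\mathrm{rann}_R(\mathrm{lc}(f))$, which is exactly where the hypothesis $\sigma^{\alpha_t}(r)=r$ for all $r\in\mathrm{rann}_R(\mathrm{lc}(f))$ enters: it gives $\sigma^{\alpha_t}$ restricts to the identity on $\mathrm{rann}_R(\mathrm{lc}(f))$, and since $\sigma^{\alpha_t}$ is injective this forces $(\sigma^{\alpha_t})^{-1}(\mathrm{rann}_R(\mathrm{lc}(f)))=\mathrm{rann}_R(\mathrm{lc}(f))$. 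So $c_1\in\mathrm{rann}_R(\mathrm{lc}(f))$.

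\medskip

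\textbf{Finishing the induction.} Having $c_1\in\mathrm{rann}_R(\mathrm{lc}(f))$, set $g':=g-c_1x^{\beta_1}$. By the easy direction, $f(c_1x^{\beta_1})$: is it zero? We have $fc_1=0$ by the minimal-leader-monomial argument applied to $c_1\in\mathrm{rann}_R(\mathrm{lc}(f))$, hence $f(c_1x^{\beta_1})=(fc_1)x^{\beta_1}=0$, so $fg'=fg=0$ and $lm(g')\prec x^{\beta_1}$. By induction on the (well-ordered) leader monomial, $g'\in(\mathrm{rann}_R(\mathrm{lc}(f)))A$, and therefore $g=c_1x^{\beta_1}+g'\in(\mathrm{rann}_R(\mathrm{lc}(f)))A$. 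The base case $g=0$ is trivial. This completes the reverse inclusion, and combined with the first paragraph gives $\mathrm{rann}_A(f)=(\mathrm{rann}_R(\mathrm{lc}(f)))A$. The main obstacle, and the place to be most careful, is the interplay between $\sigma^{\alpha_t}$ and the right ideal $\mathrm{rann}_R(\mathrm{lc}(f))$: one must verify that $\sigma^{\alpha_t}$ being the identity on this set plus injectivity genuinely yields stability under $(\sigma^{\alpha_t})^{-1}$, and that the lower-order terms $p_{\alpha_t,c_1}$ and $p_{\alpha_t,\beta_1}$ appearing in \eqref{611}–\eqref{612} do not obstruct the vanishing of the leader term of $fg$ (they only contribute strictly below $x^{\alpha_t+\beta_1}$, so they are harmless for extracting $\mathrm{lc}(f)\sigma^{\alpha_t}(c_1)=0$, but this should be stated explicitly).
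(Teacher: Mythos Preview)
Your proposal is correct and follows essentially the same route as the paper's proof: both directions are handled identically. For the inclusion $(\mathrm{rann}_R(\mathrm{lc}(f)))A\subseteq \mathrm{rann}_A(f)$, the paper likewise uses that $fr\in J$ with $lm(fr)\prec x^{\alpha_t}$ forces $fr=0$ by minimality; for the reverse inclusion, the paper also extracts the leading coefficient of $g\in\mathrm{rann}_A(f)$, obtains $\sigma^{\alpha_t}(c_1)\in\mathrm{rann}_R(\mathrm{lc}(f))$, uses the hypothesis $\sigma^{\alpha_t}|_{\mathrm{rann}_R(\mathrm{lc}(f))}=\mathrm{id}$ together with injectivity of $\sigma^{\alpha_t}$ to conclude $c_1\in\mathrm{rann}_R(\mathrm{lc}(f))$, peels off $c_1x^{\beta_1}$, and iterates. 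Your write-up is in fact slightly more explicit than the paper's about the role of the lower-order terms $p_{\alpha_t,c_1}$, $p_{\alpha_t,\beta_1}$ and the constant $c_{\alpha_t,\beta_1}$ in identifying the leader term of $fg$.
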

\begin{proof}
Consider $0\neq f=m_1X_1+\dotsb +m_tX_t$ an element of $J$ of minimal leader monomial
$X_t=x^{\alpha_t}$, with $X_t\succ X_{t-1}\succ \cdots \succ X_1$. By definition of the right
annihilator, ${\rm rann}_R({\rm lc}(f)):=\{r\in R\mid m_tr=0\}$. From Theorem \ref{coefficientes}
we have
\[
fr=m_1X_1r+\dotsb + m_t(\sigma^{\alpha_t}(r)x^{\alpha_t}+p_{\alpha_t,r}),
\]
where $p_{\alpha_t,r}=0$ or $\deg(p_{\alpha_t,r})<|\alpha_t|$ if $p_{\alpha_t, r}\neq 0$. Note that
if $r\in {\rm rann}_R({\rm lc}(f))$, then $fr=0$. In fact, if the contrary is assumed, since
$\sigma^{\alpha_t}(r)=r$, we get $lm(fr)\prec X_t$ with $fr\in J$, but this is a contradiction
since $X_t$ is minimal. Thus, $f{\rm rann}_R({\rm lc}(f))=0$ and $f{\rm rann}_R({\rm lc}(f))A=0$.
Therefore ${\rm rann}_R({\rm lc}(f))A\subseteq {\rm rann}_A(f)$.

Next we will show that ${\rm rann}_A(f) \subseteq {\rm rann}_R({\rm lc}(f))A$. Let $u=r_1Y_1+\dotsb
+ r_kY_k$ an element of ${\rm rann}_A(f)$, then
\[
fu=(m_1X_1+\dotsb +m_tX_t)(r_1Y_1+\dotsb + r_kY_k)=0,
\]
which implies that $m_tX_tr_kY_k=0$, whence $m_t\sigma^{\alpha_t}(r_k)X_tY_k=0$, that is,
$m_t\sigma^{\alpha_t}(r_k)=0$ which means $\sigma^{\alpha_t}(r_k)\in {\rm rann}_R(m_t)$. Let
$\sigma^{\alpha_t}(r_k):=s$. Then $s\in {\rm rann}_R({\rm lc}(f))$. Note that
$r_k=\sigma^{-\alpha_t}(s)=s$; moreover $\sigma^{\alpha_t}(s)=s$ implies $s=\sigma^{-\alpha_t}(s)$
and hence $r_k\in {\rm rann}_R({\rm lc}(f))$. This shows that $r_kY_k\in {\rm rann}_R({\rm
lc}(f))A\subseteq {\rm rann}_A(f)$, but since $u\in {\rm rann}_A(f)$ then $u-r_kY_k\in {\rm
rann}_A(f)$. Continuing in this way we obtain that $r_{k-1}Y_{k-1}, r_{k-2}Y_{k-2},\dotsc,
r_1Y_1\in {\rm rann}_R({\rm lc}(f))A$, which guarantees that $u\in {\rm rann}_R({\rm lc}(f))A$.
Thus, we have proved that ${\rm rann}_A(f)\subseteq {\rm rann}_R({\rm lc}(f))A$.
\end{proof}

\begin{proposition}\label{Terminator1}
Let $A$ be a skew $PBW$ extension of derivation type of a ring $R$ and let $K$ be a non zero ideal
of $A$. Let $K'$ be the ideal of $R$ generated by all coefficients of all terms of all polynomials
of $K$. Then $K'$ is a $\Delta$-invariant ideal of $R$.
\end{proposition}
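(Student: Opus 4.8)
The goal is to verify that $\delta_i(K')\subseteq K'$ for every $1\le i\le n$, which is precisely what it means for $K'$ to be $\Delta$-invariant. Since $A$ is of derivation type we have $\sigma_i=i_R$, so by Proposition~\ref{sigmadefinition} the commutation relation is simply $x_ir=rx_i+\delta_i(r)$, and $\delta_i$ is an ordinary derivation of $R$, that is $\delta_i(rs)=\delta_i(r)s+r\delta_i(s)$. Let $S\subseteq R$ denote the set of all coefficients of all terms of all polynomials of $K$, so that $K'$ is the two-sided ideal of $R$ generated by $S$. The plan is first to reduce the problem to showing $\delta_i(c)\in K'$ for each $c\in S$: a typical element of $K'$ is a finite sum of terms $acb$ with $a,b\in R$, $c\in S$, and the Leibniz rule gives $\delta_i(acb)=\delta_i(a)\,cb+a\,\delta_i(c)\,b+ac\,\delta_i(b)$; here the outer two summands lie in $K'$ because $cb\in K'$ and $ac\in K'$, while the middle one lies in $K'$ as soon as $\delta_i(c)\in K'$.

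Next I would fix $c\in S$ and pick $f\in K$ having $c$ among its coefficients, written in standard form $f=c_1X_1+\dots+c_tX_t$ with $c_j\in R\setminus\{0\}$ and $X_1,\dots,X_t$ pairwise distinct standard monomials. Since $K$ is an ideal, the commutator $g:=x_if-fx_i$ belongs to $K$. Using $x_ic_j=c_jx_i+\delta_i(c_j)$ and associativity of the left $R$-action, one then computes
\begin{equation*}
x_if=\sum_{j=1}^{t}c_j\,(x_iX_j)+\sum_{j=1}^{t}\delta_i(c_j)\,X_j,\qquad fx_i=\sum_{j=1}^{t}c_j\,(X_jx_i),
\end{equation*}
hence
\begin{equation*}
g=\sum_{j=1}^{t}\delta_i(c_j)\,X_j\;+\;\sum_{j=1}^{t}c_j\,\bigl(x_iX_j-X_jx_i\bigr).
\end{equation*}

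The crucial point is that the second sum, once rewritten in standard form, has all of its coefficients in $K'$. Indeed, for any $r\in K'$ and any $h\in A$, writing $h=\sum_k d_kY_k$ in standard form gives $rh=\sum_k (rd_k)Y_k$, whose coefficients $rd_k$ all lie in $K'$ because $K'$ is a right ideal; applying this with $r=c_j\in S\subseteq K'$ and $h=x_iX_j-X_jx_i$, and summing over $j$, proves the claim. On the other hand $g\in K$, so every coefficient of $g$ belongs to $S\subseteq K'$. Subtracting, the element $\sum_{j}\delta_i(c_j)X_j$ has all of its standard-form coefficients in $K'$; but the $X_j$ are pairwise distinct standard monomials, so those coefficients are exactly the $\delta_i(c_j)$, and therefore $\delta_i(c_j)\in K'$ for every $j$ — in particular $\delta_i(c)\in K'$.

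Since $c\in S$ and $i$ were arbitrary, the reduction in the first paragraph then yields $\delta_i(K')\subseteq K'$ for all $i$, i.e. $K'$ is $\Delta$-invariant. The argument is elementary and I do not anticipate a real obstacle; the steps needing mild care are the Leibniz-rule reduction to generators of $K'$, the bookkeeping showing that left multiplication by an element of $K'$ keeps all polynomial coefficients inside $K'$, and the fact — used in the computation of $g$ — that $\sigma_i$ is the identity, which is exactly where the hypothesis that $A$ is of derivation type is genuinely used.
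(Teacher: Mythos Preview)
Your proof is correct and follows essentially the same strategy as the paper: reduce via the Leibniz rule to showing $\delta_i(c)\in K'$ for each coefficient $c$, then exploit that $K$ is a two-sided ideal by multiplying a polynomial of $K$ by $x_i$ and reading off coefficients. The only difference is cosmetic: the paper uses the one-sided product $x_ip$ and extracts the coefficient at a single monomial $x^{\alpha}$, whereas you use the commutator $x_if-fx_i$, which makes the top-degree contributions cancel and yields all the $\delta_i(c_j)$ at once; your version is arguably a bit tidier, but the underlying idea is the same.
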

\begin{proof}
Let $k\in K'$, then $k$ is a finite sum of elements of the form $rcr'$, with $r,r'\in R$ and $c$ is
the coefficient of one term of some polynomial of $K$. It is enough to prove that for every $1\leq
i\leq n$, $\delta_i(rcr')\in K'$. We have
$\delta_i(rcr')=rc\delta_i(r')+r\delta_i(c)r'+\delta_i(r)cr'$. Note that
$rc\delta_i(r'),\delta_i(r)cr'\in K'$, so only rest to prove that $\delta_i(c)\in K'$. There exists
$p\in K$ such that $p=cx^{\alpha}+p'$, with $p'\in A$ and $x^{\alpha}$ does not appear in $p'$.
Note that the coefficients of all terms of $p'$ are also in $K'$. Observe that $x_ip\in K$ and we
have
\begin{center}
$x_ip=x_icx^{\alpha}+x_ip'=cx_ix^{\alpha}+\delta_i(c)x^{\alpha}+x_ip'$;
\end{center}
from the previous expression we conclude that the coefficient of $x^{\alpha}$ in $x_ip$ is
$\delta_i(c)+cr+r'$, where $r$ is the coefficient of $x^{\alpha}$ in $x_ix^{\alpha}$ and $r'$ is
the coefficient of $x^{\alpha}$ in $x_ip'$. Since $c\in K'$ we only have to prove that $r'\in K'$.
Let $p'=c_1x^{\beta_1}+\cdots+c_tx^{\beta_t}$, then $c_1,\dots,c_t\in K'$ and $x^{\alpha}\notin
\{x^{\beta_1},\dots, x^{\beta_t}\}$. We have
\begin{center}
$x_ip'=(c_1x_i+\delta_i(c_1))x^{\beta_1}+\cdots
+(c_tx_i+\delta_i(c_t))x^{\beta_t}=c_1x_ix^{\beta_1}+\delta_i(c_1)x^{\beta_1}+\cdots+c_tx_ix^{\beta_t}+\delta_i(c_t)x^{\beta_t}$;
\end{center}
from the previous expression we get that $r'$ has the form $r'=c_1r_1+\cdots+c_tr_t$, where $r_j$
is the coefficient of $x^{\alpha}$ in $x_ix^{\beta_j}$, $1\leq j\leq t$. This proves that $r'\in
K'$.
\end{proof}

The following theorem gives a description of prime ideals of skew $PBW$ extensions of derivation
type without assuming any conditions on the ring of coefficients. This result generalizes the
description of prime ideals of classical $PBW$ extensions given in Proposition 6.2 of \cite{Bell}.
Compare also with \cite{McConnell}, Proposition 14.2.5 and Corollary 14.2.6.

\begin{theorem}\label{Terminator2}
Let $A$ be a skew $PBW$ extension of derivation type of a ring $R$. Let $I$ be a proper
$\Delta$-invariant ideal of $R$. $I$ is a $\Delta$-prime ideal of $R$ if and only if $IA$ is a
prime ideal of $A$. In such case, $IA=AI$ and $IA\cap R=I$.
\begin{proof}
(i) By Proposition \ref{1.4.2a} we know that $A/IA$ is a skew $PBW$ extension of $R/I$ of
derivation type, $IA=AI$ and $IA\cap R=I$. Then we may assume that $I=0$. Note that if $R$ is not
$\Delta$-prime, then $A$ is not prime. Indeed, there exist $I,J\neq 0$ $\Delta$-invariant ideals of
$R$ such that $IJ=0$, so $IA,JA\neq 0$ and $IAJA=IJA=0$, i.e., $A$ is not prime.

Suppose that $R$ is $\Delta$-prime. We need to show that if $J,K$ are nonzero ideals of $A$, then
$JK\neq 0$. Let $K'$ as in Proposition \ref{Terminator1}, then $K'\neq 0$ and it is
$\Delta$-invariant. Now let $j$ be a nonzero element of $J$ of minimal leader monomial. If $jK=0$,
then taking $M=R$ and $T=J$ in Proposition \ref{Terminator} we get
\[
K\subseteq {\rm rann}_A(j)={\rm rann}_R({\rm lc}(j))A.
\]
Therefore ${\rm lc}(j)K'=0$ and hence ${\rm lann}_R(K')\neq 0$. We have ${\rm lann}_R(K')K'=0$, and
note that ${\rm lann}_R(K')$ is also $\Delta$-invariant. In fact, let $a\in {\rm lann}_R(K')$ and
$k'\in K'$, then $\delta_i(a)k'=\delta_i(ak')-a\delta_i(k')=\delta_i(0)-a\delta_i(k')=0$ since
$\delta_i(k')\in K'$. Thus, $R$ is not $\Delta$-prime, a contradiction. In this way $jK\neq 0$ and
so $JK\neq 0$, which concludes the proof.
\end{proof}
\end{theorem}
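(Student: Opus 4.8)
The plan is to reduce to the case $I=0$ by passing to the quotient. By Proposition \ref{1.4.2a}(iii), since $I$ is a proper $\Delta$-invariant ideal and $A$ is of derivation type, $A/IA$ is again a skew $PBW$ extension of derivation type of $R/I$, with $IA=AI$ and $IA\cap R=I$; moreover, by Proposition \ref{1.4.2b}(iii), $I$ is $\Delta$-prime in $R$ if and only if $R/I$ is $\overline{\Delta}$-prime. So it suffices to prove: $R$ is $\Delta$-prime $\iff$ $A$ is a prime ring. I would record these reductions first.

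For the easy direction I would prove the contrapositive: if $R$ is not $\Delta$-prime, pick nonzero $\Delta$-invariant ideals $I,J$ of $R$ with $IJ=0$; then $IA,JA$ are nonzero (two-sided, using $\Delta$-invariance) ideals of $A$ and $(IA)(JA)=IJA=0$, so $A$ is not prime. For the hard direction, assume $R$ is $\Delta$-prime and let $J,K$ be nonzero ideals of $A$; I must show $JK\neq 0$. The strategy is: let $K'$ be the ideal of $R$ generated by all coefficients of all terms of all polynomials in $K$ — by Proposition \ref{Terminator1} this $K'$ is $\Delta$-invariant and nonzero. Suppose for contradiction that $jK=0$ where $j$ is a nonzero element of $J$ of minimal leader monomial. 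Since $A$ is of derivation type, each $\sigma_i=\mathrm{id}_R$, so the hypothesis ``$\sigma^{\alpha_t}(r)=r$ for $r\in\mathrm{rann}_R(\mathrm{lc}(j))$'' of Proposition \ref{Terminator} holds trivially; applying it (with $J$ in the role of the ambient ideal) gives $K\subseteq \mathrm{rann}_A(j)=(\mathrm{rann}_R(\mathrm{lc}(j)))A$. Contracting to $R$ via the coefficient description, this forces $\mathrm{lc}(j)\cdot K'=0$, so $\mathrm{lann}_R(K')\neq 0$.

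Now I finish by showing $\mathrm{lann}_R(K')$ is also $\Delta$-invariant: for $a\in\mathrm{lann}_R(K')$, $k'\in K'$, the derivation identity gives $\delta_i(a)k'=\delta_i(ak')-a\delta_i(k')=0-0=0$ since $ak'=0$ and $\delta_i(k')\in K'$. So $\mathrm{lann}_R(K')$ and $K'$ are two nonzero $\Delta$-invariant ideals of $R$ whose product is zero, contradicting $\Delta$-primeness of $R$. Hence $jK\neq 0$, so in particular $JK\neq 0$, and $A$ is prime.

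The main obstacle I anticipate is the step deducing $\mathrm{lc}(j)K'=0$ from $K\subseteq(\mathrm{rann}_R(\mathrm{lc}(j)))A$: one needs that every coefficient of every term of every element of $K$ lies in $\mathrm{rann}_R(\mathrm{lc}(j))$, which should follow from the uniqueness of the standard-monomial representation (Definition \ref{gpbwextension}(ii)) applied to elements of $(\mathrm{rann}_R(\mathrm{lc}(j)))A$, together with the fact that $K'$ is generated by exactly those coefficients. A secondary subtlety is making sure Proposition \ref{Terminator} is genuinely applicable — its statement assumes $\sigma_i$ bijective, which is automatic in derivation type since $\sigma_i=i_R$, and the annihilator hypothesis is then vacuous — so no extra hypothesis on $R$ is needed, which is exactly the strength of the theorem.
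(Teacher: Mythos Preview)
Your proof is correct and follows essentially the same approach as the paper's: the same reduction to $I=0$ via Proposition~\ref{1.4.2a}, the same contrapositive for the easy direction, and for the hard direction the same use of Propositions~\ref{Terminator1} and~\ref{Terminator} to obtain $K\subseteq(\mathrm{rann}_R(\mathrm{lc}(j)))A$, followed by the $\Delta$-invariance of $\mathrm{lann}_R(K')$ and the contradiction with $\Delta$-primeness. Your added remarks---that $\sigma_i=i_R$ makes the hypothesis of Proposition~\ref{Terminator} automatic, and that the deduction $\mathrm{lc}(j)K'=0$ rests on uniqueness of the standard-monomial expansion---make explicit two points the paper leaves implicit.
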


\section{Extensions of automorphism type}

In this section we consider the characterization of prime ideals for extensions of automorphism
type over commutative Noetherian rings.

\begin{proposition}\label{5.2.6N}
Let $A$ be a bijective skew $PBW$ extension of a ring $R$. Suppose that given $a,b\in R-\{0\}$
there exists $\theta\in \mathbb{N}^n$ such that either $aR\sigma^\theta(b)\neq 0$ or
$aR\delta^\theta(b)\neq 0$. Then, $A$ is a prime ring.
\end{proposition}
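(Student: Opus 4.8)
The plan is to show that the product of any two nonzero ideals $J$ and $K$ of $A$ is nonzero. I would argue by contradiction: suppose $JK=0$. Pick $0\neq f\in J$ with leader monomial as small as possible in the order $\succeq$ of Remark \ref{identities}, say $f = m_1X_1 + \dots + m_tX_t$ with $X_t = \mathrm{lm}(f)$ and $m_t = \mathrm{lc}(f)$; likewise pick $0\neq g = n_1Y_1 + \dots + n_sY_s\in K$ with minimal leader monomial $Y_s = \mathrm{lm}(g)$, $n_s = \mathrm{lc}(g)$. The hypothesis, applied to $a = m_t$ and $b = n_s$, gives some $\theta\in\mathbb{N}^n$ with $m_tR\sigma^\theta(n_s)\neq 0$ or $m_tR\delta^\theta(n_s)\neq 0$. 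The key idea is to multiply $f$ by something built from $x^\theta$, an element $r\in R$, and $g$, in such a way that the leader term is controlled and nonzero, contradicting $JK=0$.

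Concretely, I would consider the element $f\,(x^\theta r\, g)$ — or rather the product $f\cdot x^\theta\cdot r\cdot g$ — which lies in $JK$ since $x^\theta r g\in K$ (because $K$ is a two-sided ideal). First I expand $x^\theta g$ using Theorem \ref{coefficientes}: the term $n_sY_s$ contributes $x^\theta n_s Y_s = \sigma^\theta(n_s)x^\theta Y_s + (\text{lower}) = \sigma^\theta(n_s)c_{\theta,\exp(Y_s)}x^{\theta+\exp(Y_s)} + (\text{lower degree})$, and by bijectivity (Remark \ref{identities}(ii)) the constant $c_{\theta,\exp(Y_s)}$ is invertible. Then multiplying on the right by a suitable $r\in R$ and on the left by $f$, and expanding via Theorem \ref{coefficientes} again, the leader term of $f x^\theta r g$ is, up to invertible constants, $m_t\,\sigma^{\alpha_t}\!\big(\sigma^\theta(n_s)\big)\,\sigma^{\alpha_t+\theta}(r')\cdot x^{(\text{top monomial})}$ for an appropriate twist $r'$ of $r$; the minimality of $X_t$ and $Y_s$ in $J$ and $K$ forces all the ``lower'' correction terms either to vanish or to have strictly smaller leader monomial, so they cannot cancel the top term. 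Choosing $r$ so that $m_t\, r\, \sigma^\theta(n_s)\neq 0$ (possible by the hypothesis in the $\sigma$-case, after pulling $r$ through the automorphisms $\sigma^{\alpha_t}$, which are bijective) makes this leader coefficient nonzero, so $f x^\theta r g\neq 0$, contradicting $JK=0$. In the $\delta$-case one instead isolates the term coming from $\delta^\theta(n_s)$; since $A$ is of general skew $PBW$ type here, $x^\theta n_s$ has the expansion $\sigma^\theta(n_s)x^\theta + p_{\theta,n_s}$ with lower-degree correction, and one must track where $\delta^\theta(n_s)$ appears among the coefficients of $p_{\theta,n_s}$ — this is the place where the hypothesis on $\delta^\theta$ is used.

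The main obstacle I anticipate is bookkeeping: in a bijective skew $PBW$ extension (not quasi-commutative), every product $x^\alpha r$ and $x^\alpha x^\beta$ carries lower-degree baggage $p_{\alpha,r}$, $p_{\alpha,\beta}$, so I must argue carefully that none of this baggage interferes with the top term of $f x^\theta r g$. The clean way is to order things by leader monomial and invoke minimality: any nonzero element of $J$ has leader monomial $\succeq X_t$, any nonzero element of $K$ has leader monomial $\succeq Y_s$, hence any nonzero element of the appropriate shape in $JK$ has leader monomial $\succeq$ the product monomial $x^{\alpha_t}\cdot x^{\theta+\exp Y_s}$-type term, and the leader coefficient is forced to be exactly the product of leader coefficients twisted by the relevant $\sigma$'s. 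Showing that this twisted product is nonzero is precisely what the hypothesis delivers, once one notes that $\sigma^{\alpha_t}$ being an automorphism lets one absorb it. Handling the $\delta^\theta$ alternative uniformly with the $\sigma^\theta$ one — perhaps by a single induction on $|\theta|$ that peels off one $x_i = \sigma_i + \delta_i$ at a time — will require the most care.
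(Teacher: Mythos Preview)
Your direct approach has a genuine gap. When you compute the leader coefficient of $f\,r\,x^{\theta}g$ (with $f=m_tX_t+\cdots$, $X_t=x^{\alpha_t}$, and $g=n_sY_s+\cdots$), Theorem~\ref{coefficientes} gives, up to an invertible factor,
\[
m_t\,\sigma^{\alpha_t}\!\bigl(r\,\sigma^{\theta}(n_s)\bigr)=m_t\,\sigma^{\alpha_t}(r)\,\sigma^{\alpha_t}\!\bigl(\sigma^{\theta}(n_s)\bigr).
\]
As $r$ ranges over $R$, bijectivity of $\sigma^{\alpha_t}$ lets $\sigma^{\alpha_t}(r)$ range over $R$, so the condition you need is $m_tR\,\sigma^{\alpha_t}(\sigma^{\theta}(n_s))\neq 0$ for some $\theta\in\mathbb{N}^n$. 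The hypothesis, however, only supplies $\theta_0$ with $m_tR\,\sigma^{\theta_0}(n_s)\neq 0$. Your remark about ``pulling $r$ through $\sigma^{\alpha_t}$'' absorbs the twist on $r$ but not the extra $\sigma^{\alpha_t}$ sitting on $n_s$; there is no way to realise $\sigma^{\theta_0}$ as $\sigma^{\alpha_t}\circ\sigma^{\theta}$ with $\theta\in\mathbb{N}^n$ in general (the $\sigma_i$ are not assumed to commute, and even if they did you would need $\theta_0\geq\alpha_t$ componentwise). The $\delta$-alternative is worse: $\delta^{\theta}(n_s)$ is the constant term of $p_{\theta,n_s}$, so in $f\,r\,x^{\theta}g$ its contribution lands at the monomial $x^{\alpha_t+\beta_s}$, where it is inextricably mixed with contributions coming from every other pair (lower term of $f$)$\times$(higher term of $r\,x^{\theta}g$). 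Minimality of $X_t$ and $Y_s$ says nothing about these cross terms.

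The paper avoids this obstruction by inserting an extra step: after assuming $IJ=0$ with $I={\rm lann}_A(J)$, $J={\rm rann}_A(I)$, it first proves that $I\cap R\neq 0$ and $J\cap R\neq 0$. This is done by taking $u\in I$ with minimal leader monomial $x^{\alpha}$ and leader coefficient $c_u$, and showing $\sigma^{-\alpha}(c_u)\in I$ via a short double-minimality argument (pick $v\in{\rm rann}_A(u)$ minimal with $\sigma^{-\alpha}(c_u)v\neq 0$; from $uv=0$ deduce $c_u\sigma^{\alpha}(c_v)=0$, hence $uc_v=0$ by minimality of $u$, hence $v-c_vx^{\beta}\in{\rm rann}_A(u)$, and minimality of $v$ forces $\sigma^{-\alpha}(c_u)v=0$). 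Once one has $0\neq a\in I\cap R$ and $0\neq b\in J\cap R$, the product $a\,r\,x^{\theta}b=ar\sigma^{\theta}(b)x^{\theta}+ar\,p_{\theta,b}$ has leader coefficient $ar\sigma^{\theta}(b)$ and constant term $ar\delta^{\theta}(b)$, with no $\sigma^{\alpha_t}$-shift and no interference, so the hypothesis applies verbatim. That reduction to coefficients in $R$ is the missing idea in your plan.
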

\begin{proof}
Suppose that $A$ is not a prime ring, then there exist nonzero ideals $I,J$ of $A$ such that
$IJ=0$. We can assume that $I:={\rm lann}_A(J)$ and $J:={\rm rann}_A(I)$. Let $u$ be a nonzero
element of $I$ with minimal leader monomial $x^{\alpha}$ and leader coefficient $c_u$. We will
prove first that $\sigma^{-\alpha}(c_u)\in I$, i.e., $\sigma^{-\alpha}(c_u)J=0$. Since ${\rm
rann}_A(I)\subseteq {\rm rann}_A(u)$, then it is enough to show that $\sigma^{-\alpha}(c_u){\rm
rann}_A(u)=0$. Suppose that $\sigma^{-\alpha}(c_u){\rm rann}_A(u)\neq 0$, let $v\in {\rm
rann}_A(u)$ of minimal leader monomial $x^{\beta}$ and leader coefficient $c_v$ such that
$\sigma^{-\alpha}(c_u)v\neq 0$. Since $uv=0$ and $c_{\alpha,\beta}$ is invertible (see Theorem
\ref{coefficientes} and Remark \ref{identities}), then $c_u\sigma^{\alpha}(c_v)=0$, whence
$lm(uc_v)\prec x^{\alpha}$. The minimality of $x^{\alpha}$ implies that $uc_v=0$, and hence
$u(v-c_vx^{\beta})=0$. Moreover, $v-c_vx^{\beta}\in {\rm rann}_A(u)$ and $lm((v-c_vx^{\beta})\prec
x^{\beta}$, so $\sigma^{-\alpha}(c_u)(v-c_vx^{\beta})=0$. However, $c_u\sigma^{\alpha}(c_v)=0$, so
we have $\sigma^{-\alpha}(c_u)c_v=0$ and hence $\sigma^{-\alpha}(c_u)v=0$, a contradiction.

Thus, $I\cap R\neq 0$, and by symmetry, $J\cap R\neq 0$. Let $0\neq a\in I\cap R$ and $0\neq b\in
J\cap R$, by the hypothesis there exists $\theta\in \mathbb{N}^n$ and $r\in R$ such that
$ar\sigma^\theta(b)\neq 0$ or $ar\delta^\theta(b)\neq 0$. If $\theta=(0,\dots,0)$, then $arb\neq
0$, and hence $IJ\neq 0$, a contradiction. If $\theta\neq (0,\dots,0)$, then
$arx^{\theta}b=ar(\sigma^{\theta}(b)x^{\theta}+p_{\theta,b})$, but note that the independent term
of $p_{\theta,b}$ is $\delta^{\theta}(b)$ (see Theorem \ref{coefficientes}, part (b)). Thus,
$arx^{\theta}b\neq 0$, i.e., $IJ\neq 0$, a contradiction.
\end{proof}

\begin{corollary}
If $R$ is a prime ring and $A$ is a bijective skew $PBW$ extension of $R$, then $A$ is prime and
$rad(A)=0$.
\end{corollary}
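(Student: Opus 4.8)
The plan is to obtain both assertions directly from Proposition \ref{5.2.6N}. First I would verify that a prime ring $R$ automatically satisfies the hypothesis of that proposition. Given $a,b\in R-\{0\}$, take $\theta=(0,\dots,0)\in\mathbb{N}^n$; then $\sigma^{\theta}=\sigma_1^{0}\cdots\sigma_n^{0}=i_R$, so $\sigma^{\theta}(b)=b$, and since $R$ is prime and $a,b$ are nonzero we have $aR\sigma^{\theta}(b)=aRb\neq 0$. Thus the hypothesis of Proposition \ref{5.2.6N} holds (indeed with this single choice of $\theta$), and the proposition yields that $A$ is a prime ring.

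For the second assertion, I would recall that $\mathrm{rad}(A)$ denotes the prime radical of $A$, that is, the intersection of all prime ideals of $A$, equivalently the smallest semiprime ideal. Since $A$ is prime, the zero ideal is itself a prime ideal of $A$, hence $\mathrm{rad}(A)\subseteq 0$ and therefore $\mathrm{rad}(A)=0$. Equivalently, a prime ring is semiprime, and a ring is semiprime exactly when its prime radical vanishes.

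I do not expect any genuine obstacle here: the only observation worth making is that the extra generality built into Proposition \ref{5.2.6N} — permitting an arbitrary $\theta$ and the alternative involving $\delta^{\theta}$ — is unnecessary in this case, because primeness of $R$ already forces the $\theta=0$ instance to work. So the corollary is essentially a specialization of the preceding proposition combined with the elementary fact that the prime radical of a prime ring is zero.
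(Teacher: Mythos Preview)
Your proof is correct and follows essentially the same approach as the paper: both verify the hypothesis of Proposition \ref{5.2.6N} using primeness of $R$ and then invoke that proposition. The only cosmetic differences are that the paper observes $aR\sigma^{\theta}(b)\neq 0$ for \emph{every} $\theta$ (using bijectivity to ensure $\sigma^{\theta}(b)\neq 0$) rather than just $\theta=0$, and that the paper leaves the implication ``$A$ prime $\Rightarrow$ $\mathrm{rad}(A)=0$'' implicit whereas you spell it out.
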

\begin{proof}
Since $R$ is prime and $A$ is bijective, given $a,b\in R-\{0\}$, then $aR\sigma^{\theta}(b)\neq 0$
for every $\theta\in \mathbb{N}^n$. Thus, from the previous proposition, $A$ is prime.
\end{proof}

\begin{lemma}\label{3.3.3b}
Let $A$ be a bijective $\Sigma$-commutative skew $PBW$ extension of automorphism type of a left
$(right)$ Noetherian ring $R$. Let $I$ be a proper ideal of $R$ $\Sigma$-invariant. $I$ is a
$\Sigma$-prime ideal of $R$ if and only if $IA$ is a prime ideal of $A$. In such case, $IA=AI$ and
$IA\cap R=I$.
\end{lemma}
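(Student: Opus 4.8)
The plan is to first reduce to the case $I=0$ and then to invoke Proposition \ref{5.2.6N}. Since $R$ is left $($right$)$ Noetherian and each $\sigma_i$ is bijective, Proposition \ref{1.4.2a} (parts (i) and (iv)) gives $\sigma_i(I)=I$ for all $i$, $IA=AI$ and $IA\cap R=I$; moreover, by part (ii), $A/IA$ is again a bijective $\overline\Sigma$-commutative skew $PBW$ extension of automorphism type, now of the left $($right$)$ Noetherian ring $R/I$. Since $IA$ is a prime ideal of $A$ if and only if $A/IA$ is a prime ring, and, by Proposition \ref{1.4.2b}(iii), $I$ is $\Sigma$-prime if and only if $R/I$ is $\overline\Sigma$-prime, it is enough to prove the following statement: if $A$ is a bijective $\Sigma$-commutative skew $PBW$ extension of automorphism type of a left $($right$)$ Noetherian ring $R$, then $R$ is $\Sigma$-prime if and only if $A$ is prime. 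The two equalities in ``in such case'' are already recorded above.

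For the implication ``$A$ prime $\Rightarrow$ $R$ $\Sigma$-prime'' I argue by contraposition. If $R$ is not $\Sigma$-prime, choose nonzero $\Sigma$-invariant ideals $B,C$ of $R$ with $BC=0$. Using that $R$ is Noetherian (and $B,C$ are in particular one-sided ideals) together with the ascending chain $B\subseteq\sigma_i^{-1}(B)\subseteq\sigma_i^{-2}(B)\subseteq\cdots$, and similarly for $C$, we may assume $\sigma_i(B)=B$ and $\sigma_i(C)=C$ for all $i$. Then $BA$ and $CA$ are two-sided ideals of $A$ with $BA=AB$ and $CA=AC$ by Proposition \ref{1.4.2a}(i), both nonzero because $BA\cap R=B\neq 0$ and $CA\cap R=C\neq 0$; and $(BA)(CA)=BACA=B(CA)A=BCAA=0$, so $A$ is not prime.

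For the converse, assume $R$ is $\Sigma$-prime. I want to apply Proposition \ref{5.2.6N}, and since $A$ is of automorphism type the $\delta$-alternative there is vacuous, so it suffices to check that for all $a,b\in R-\{0\}$ there is $\theta\in\mathbb{N}^n$ with $aR\sigma^\theta(b)\neq 0$. Suppose not, i.e.\ $aR\sigma^\theta(b)=0$ for every $\theta\in\mathbb{N}^n$, and set $J:=\sum_{\theta\in\mathbb{N}^n}R\,\sigma^\theta(b)\,R$. Then $J\neq 0$, since it contains the $\theta=0$ term $RbR\ni b$. Because $\Sigma$ is commutative, $\sigma_i(\sigma^\theta(b))=\sigma^{\theta+e_i}(b)$, where $e_i$ denotes the $i$-th canonical vector of $\mathbb{N}^n$, hence $\sigma_i(J)\subseteq J$, and again the Noetherian hypothesis promotes this to $\sigma_i(J)=J$; thus $J$ is $\Sigma$-invariant. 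Put $L:={\rm lann}_R(J)$. Since $aJ=\sum_\theta aR\sigma^\theta(b)R=0$, we have $a\in L$, so $L\neq 0$; and $L$ is $\Sigma$-invariant, because $x\in L$ means $xJ=0$, whence $\sigma_i(x)J=\sigma_i(x)\sigma_i(J)=\sigma_i(xJ)=0$, i.e.\ $\sigma_i(x)\in L$. Now $L$ and $J$ are nonzero $\Sigma$-invariant ideals of $R$ with $LJ=0$, contradicting that $R$ is $\Sigma$-prime. Therefore the required $\theta$ exists, and Proposition \ref{5.2.6N} yields that $A$ is prime.

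The delicate point is the last paragraph: packaging the whole family $\{\sigma^\theta(b)\}_{\theta\in\mathbb{N}^n}$ into a single $\Sigma$-invariant ideal $J$ and verifying its invariance. This is exactly where $\Sigma$-commutativity enters (so that $\sigma_i(J)\subseteq J$) and where the Noetherian hypothesis is used a second time, to upgrade $\sigma_i(J)\subseteq J$ to $\sigma_i(J)=J$, which is in turn needed for $L={\rm rm lann}_R(J)$ to be $\Sigma$-invariant. Once the hypothesis of Proposition \ref{5.2.6N} has been verified, everything else is routine bookkeeping with Propositions \ref{1.4.2a} and \ref{1.4.2b}.
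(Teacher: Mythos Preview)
Your proof is correct and follows essentially the same approach as the paper's: both reduce to $I=0$ via Propositions \ref{1.4.2a} and \ref{1.4.2b}, handle the backward direction by the obvious contrapositive, and for the forward direction verify the hypothesis of Proposition \ref{5.2.6N} by packaging $\{\sigma^\theta(b)\}_{\theta\in\mathbb{N}^n}$ into a $\Sigma$-invariant ideal (using $\Sigma$-commutativity), upgrading to $\sigma_i(J)=J$ via Noetherianity, and then using that the annihilator is a nonzero $\Sigma$-invariant ideal to reach a contradiction. Your write-up is in fact slightly more careful than the paper's in a couple of places (e.g.\ explicitly upgrading $B,C$ to satisfy $\sigma_i(B)=B$, $\sigma_i(C)=C$ before invoking $BA=AB$), and there is a harmless typo in your last paragraph where ${\rm lann}_R(J)$ is rendered as ``${\rm rm\ lann}_R(J)$''.
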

\begin{proof}
By Proposition \ref{1.4.2a}, $IA=AI$ is a proper ideal of $A$, $I=IA\cap R$ and
$\overline{A}:=A/IA$ is a bijective skew $PBW$ extension of $\overline{R}:=R/I$ of automorphism
type. In addition, observe that $I$ es $\Sigma$-prime if and only if $\overline{R}$ is
$\overline{\Sigma}$-prime (Proposition \ref{1.4.2b}). Thus, we can assume that $I=0$, and hence, we
have to prove that $R$ is $\Sigma$-prime if and only if $A$ is prime.

$\Rightarrow)$: Suppose that $R$ is $\Sigma$-prime, i.e., $0$ is $\Sigma$-prime. According to
Proposition \ref{5.2.6N}, we have to show that given $a,b\in R-\{0\}$ there exists $\theta\in
\mathbb{N}^n$ such that $aR\sigma^\theta(b)\neq 0$. Let $L$ be the ideal generated by the elements
$\sigma^{\theta}(b)$, $\theta\in \mathbb{N}^n$; observe that $L\neq 0 $, and since $A$ is
$\Sigma$-commutative, $L$ is $\Sigma$-invariant. But $R$ is Noetherian and $A$ is bijective, then
$\sigma_i(L)=L$ for every $1\leq i\leq n$ (see Proposition \ref{1.4.2a}). This implies that
$Ann_R(L)$ is $\Sigma$-invariant, but $0$ is $\Sigma$-prime, therefore $Ann_R(L)=0$. Thus, $aL\neq
0$, so there exists $\theta\in \mathbb{N}^n$ such that $a\sigma^\theta(b)\neq 0$.

$\Leftarrow)$: Note that if $R$ is not $\Sigma$-prime, then $A$ is not prime. Indeed, there exist
$K,J\neq 0$ $\Sigma$-invariant ideals of $R$ such that $KJ=0$, so $KA,JA\neq 0$ and since $J$ is
$\Sigma$-invariant, then $AJ=JA$ and hence $KAJA=KJA=0$, i.e., $A$ is not prime.
\end{proof}

\begin{proposition}\label{1.4.21}
Any skew $PBW$ extension $A$ of a commutative ring $R$ is $\Sigma$-commutative.
\end{proposition}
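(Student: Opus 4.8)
The goal is the identity $\sigma_i\sigma_j=\sigma_j\sigma_i$ for all $1\le i,j\le n$. It is trivial when $i=j$ and symmetric in $i$ and $j$, so it suffices to treat the case $i<j$, and the plan is to compute $x_jx_ir$ in two ways, for an arbitrary $r\in R$, and to compare the coefficient of the standard monomial $x_ix_j$ in the standard representation of the result.

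The two tools are the relations of Proposition \ref{sigmadefinition} and Definition \ref{gpbwextension}(iv): for each $k$ and each $s\in R$ one has $x_ks=\sigma_k(s)x_k+\delta_k(s)$; and, since $i<j$, $x_jx_i=c_{i,j}x_ix_j+h$ with $h\in R+Rx_1+\cdots+Rx_n$ (so $h=0$ or $\deg h\le 1$), where $c_{i,j}$ is left invertible by Theorem \ref{coefficientes}(b). For the first computation I would move $r$ past $x_i$ and then past $x_j$, obtaining $x_jx_ir=\sigma_j\sigma_i(r)\,x_jx_i+w$ with $\deg w\le 1$, and then substitute $x_jx_i=c_{i,j}x_ix_j+h$; since $\sigma_j\sigma_i(r)h$ again has degree $\le 1$, the coefficient of $x_ix_j$ in the standard form of $x_jx_ir$ is $\sigma_j\sigma_i(r)\,c_{i,j}$. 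For the second computation I would first use the commutation relation and then move $r$ through: $x_jx_ir=(c_{i,j}x_ix_j+h)r=c_{i,j}(x_ix_jr)+hr$, where $hr$ has degree $\le 1$ and $x_ix_jr=\sigma_i\sigma_j(r)\,x_ix_j+w'$ with $\deg w'\le 1$, so this time the coefficient of $x_ix_j$ equals $c_{i,j}\,\sigma_i\sigma_j(r)$. By uniqueness of the standard representation of an element of $A$ (Remark \ref{identities}(iii)) we conclude $\sigma_j\sigma_i(r)\,c_{i,j}=c_{i,j}\,\sigma_i\sigma_j(r)$.

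Finally I would use that $R$ is commutative: the left-hand side equals $c_{i,j}\,\sigma_j\sigma_i(r)$, so $c_{i,j}\bigl(\sigma_j\sigma_i(r)-\sigma_i\sigma_j(r)\bigr)=0$, and since $c_{i,j}$ is left invertible --- hence a unit in the commutative ring $R$ --- this forces $\sigma_j\sigma_i(r)=\sigma_i\sigma_j(r)$. As $r\in R$ was arbitrary, $\sigma_i\sigma_j=\sigma_j\sigma_i$, so $\Sigma$ is commutative. The step demanding the most care is the degree bookkeeping that isolates the coefficient of $x_ix_j$: one must observe that every term other than the displayed leading one has degree at most $1$, and that $x_ix_j$ is a standard monomial precisely because $i<j$, so nothing else can contribute to that coefficient. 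This is really the only delicate point; the remaining manipulations are routine applications of Proposition \ref{sigmadefinition} and Theorem \ref{coefficientes}.
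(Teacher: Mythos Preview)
Your proposal is correct and follows essentially the same approach as the paper: both compute the coefficient of $x_ix_j$ (equivalently, $lc(x_jx_ir)$) in two ways to obtain $\sigma_j\sigma_i(r)\,c_{i,j}=c_{i,j}\,\sigma_i\sigma_j(r)$, then use commutativity of $R$ and invertibility of $c_{i,j}$ to cancel. Your write-up simply spells out in detail the degree bookkeeping that the paper compresses into the single line $lc(x_jx_ir)=\sigma_j\sigma_i(r)c_{ij}=c_{ij}\sigma_i\sigma_j(r)$.
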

\begin{proof}
For $i=j$ it is clear that $\sigma_j\sigma_i=\sigma_i\sigma_j$. Let $i\neq j$, say $i<j$, then for
any $r\in R$ we have $lc(x_jx_ir)=\sigma_j\sigma_i(r)c_{ij}=c_{ij}\sigma_i\sigma_j(r)$, but since
$R$ is commutative and $c_{ij}$ is invertible, then $\sigma_j\sigma_i(r)=\sigma_i\sigma_j(r)$.
\end{proof}

\begin{theorem}\label{3.3.5a}
Let $A$ be a bijective skew $PBW$ extension of automorphism type of a commutative Noetherian ring
$R$. Let $I$ be a proper $\Sigma$-invariant ideal of $R$. $I$ is a $\Sigma$-prime ideal of $R$ if
and only if $IA$ is a prime ideal of $A$. In such case, $IA=AI$ and $IA\cap R=I$.
\end{theorem}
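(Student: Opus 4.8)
The plan is to deduce Theorem \ref{3.3.5a} from Lemma \ref{3.3.3b} by checking that the hypotheses of the lemma are automatically satisfied in the present situation. Lemma \ref{3.3.3b} requires $A$ to be a bijective $\Sigma$-commutative skew $PBW$ extension of automorphism type over a left (right) Noetherian ring $R$. Here $R$ is commutative, hence both left and right Noetherian, and $A$ is assumed bijective of automorphism type. So the only missing ingredient is the $\Sigma$-commutativity of $A$, and that is exactly what Proposition \ref{1.4.21} provides: any skew $PBW$ extension of a commutative ring is $\Sigma$-commutative.

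Concretely, I would argue as follows. Since $R$ is commutative, Proposition \ref{1.4.21} gives that $A$ is $\Sigma$-commutative. Since $R$ is Noetherian and $A$ is bijective of automorphism type, all the hypotheses of Lemma \ref{3.3.3b} hold. Applying that lemma to the proper $\Sigma$-invariant ideal $I$, we obtain that $I$ is $\Sigma$-prime if and only if $IA$ is a prime ideal of $A$, and in that case $IA=AI$ and $IA\cap R=I$. This is precisely the assertion of the theorem.

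In fact there is essentially no obstacle here: the theorem is a direct specialization of the preceding lemma, with commutativity of $R$ used solely to supply the $\Sigma$-commutativity hypothesis via Proposition \ref{1.4.21}. If one wanted a self-contained argument not citing Lemma \ref{3.3.3b}, the step that would carry the real content is the forward implication: assuming $R$ is $\Sigma$-prime (after reducing to $I=0$ via Proposition \ref{1.4.2a}), one forms the ideal $L$ generated by $\{\sigma^\theta(b):\theta\in\mathbb{N}^n\}$ for a fixed $0\neq b\in R$, notes $L\neq 0$ and that $\Sigma$-commutativity makes $L$ a $\Sigma$-invariant ideal, uses the Noetherian hypothesis together with bijectivity to get $\sigma_i(L)=L$ (Proposition \ref{1.4.2a}(iv)), deduces that $\mathrm{Ann}_R(L)$ is $\Sigma$-invariant and hence zero, and then invokes Proposition \ref{5.2.6N} to conclude $A$ is prime. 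The converse is the easy direction, since a nontrivial product of $\Sigma$-invariant ideals of $R$ annihilating each other extends to a nontrivial product of ideals of $A$ annihilating each other. But since all of this is already packaged in Lemma \ref{3.3.3b}, the cleanest proof is simply the reduction described above.
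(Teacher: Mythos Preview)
Your proposal is correct and matches the paper's own proof essentially verbatim: the paper simply writes ``This follows from Lemma \ref{3.3.3b} and Propositions \ref{1.4.2a} and \ref{1.4.21},'' which is precisely your reduction (Proposition \ref{1.4.21} supplies $\Sigma$-commutativity, and then Lemma \ref{3.3.3b} applies). Your optional self-contained sketch also mirrors the proof of Lemma \ref{3.3.3b} accurately.
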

\begin{proof}
This follows from Lemma \ref{3.3.3b} and Propositions \ref{1.4.2a} and \ref{1.4.21}.
\end{proof}

\section{Extensions of mixed type}

Our next task is to give a description of prime ideals of bijective skew $PBW$ extensions of mixed
type, i.e., when both systems $\Sigma$ and $\Delta$ could be non trivial. We will assume that the
ring $R$ is commutative, Noetherian and semiprime. The proof of the main theorem (Theorem
\ref{5.2.4N}) is an in Lemma \ref{3.3.3b}, but anyway we have to show first some preliminary
technical propositions.

\begin{definition}
Let $R$ be a commutative ring, $I$ a proper ideal of $R$ and $\overline{R}:=R/I$, we define
\begin{center}
$S(I):=\{a\in R|\overline{a}:=a+I \ \text{is regular }\}$.
\end{center}
\end{definition}
By regular element we mean a non zero divisor. Note that $S(0)$ is the set of regular elements of
$R$. Next we will describe the behavior of the properties introduced in Definition \ref{1.4.1} when
we pass to the total ring of fractions.

\begin{proposition}\label{1.4.12}
Let $R$ be a commutative ring with total ring of fractions $Q(R)$ and let $(\Sigma,\Delta)$ be a
system of automorphisms and $\Sigma$-derivations of $R$. Then,
\begin{enumerate}
\item[\rm (i)]Over $Q(R)$ is induced a system
$(\widetilde{\Sigma},\widetilde{\Delta})$ of automorphisms and $\widetilde{\Sigma}$-derivations
defined by $\widetilde{\sigma_i}(\frac{a}{s}):=\frac{\sigma_i(a)}{\sigma_i(s)}$ and
$\widetilde{\delta_i}(\frac{a}{s}):=-\frac{\delta_i(s)}{\sigma_i(s)}\frac{a}{s}+\frac{\delta_i(a)}{\sigma_i(s)}$.
\item[\rm (ii)]$Q(R)$ is $\widetilde{\Sigma}$-prime if and only if $R$ is $\Sigma$-prime. The same is
valid for $\Delta$ and $(\Sigma,\Delta)$.
\end{enumerate}
\end{proposition}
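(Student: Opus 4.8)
The proposition splits cleanly: (i) constructs the extended system on $Q(R)$, and (ii) transfers $\Sigma$-/$\Delta$-/$(\Sigma,\Delta)$-primeness across the localization; I expect essentially all the care to lie in (i), for the map $\widetilde{\delta_i}$, with (ii) a formal run through the localization--ideal dictionary. For (i), the key preliminary is that each automorphism $\sigma_i$ fixes the set $S(0)$ of regular elements of $R$: if $s\in S(0)$ and $\sigma_i(s)t=0$, then writing $t=\sigma_i(t')$ by surjectivity gives $\sigma_i(st')=0$, so $st'=0$, so $t'=0$ and $t=0$; thus $\sigma_i(S(0))=S(0)$. Hence the map $R\to Q(R)$, $r\mapsto\sigma_i(r)/1$, sends $S(0)$ into the units of $Q(R)$, and the universal property of $Q(R)=S(0)^{-1}R$ produces a unique ring homomorphism $\widetilde{\sigma_i}\colon Q(R)\to Q(R)$ with $\widetilde{\sigma_i}(a/s)=\sigma_i(a)/\sigma_i(s)$; it is bijective because the homomorphism obtained the same way from $\sigma_i^{-1}$ is its two-sided inverse. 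So $\widetilde{\Sigma}$ is a system of automorphisms of $Q(R)$ and $\widetilde{\sigma_i}|_R=\sigma_i$.

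For $\widetilde{\delta_i}$ I would first note that the displayed formula is forced: from $\widetilde{\delta_i}(s\cdot s^{-1})=\widetilde{\delta_i}(1)=0$ and the $\widetilde{\sigma_i}$-Leibniz rule one gets $\widetilde{\delta_i}(s^{-1})=-\sigma_i(s)^{-1}\delta_i(s)s^{-1}$, whence $\widetilde{\delta_i}(a/s)=\widetilde{\delta_i}(s^{-1}a)=\widetilde{\sigma_i}(s^{-1})\delta_i(a)+\widetilde{\delta_i}(s^{-1})a$ is exactly the stated expression after using commutativity of $Q(R)$. The substantive work is then: (a) well-definedness --- since $s,t$ are regular, $a/s=b/t$ in $Q(R)$ means $at=bs$ in $R$, and one checks the two candidate values agree, which (it suffices to check invariance under the scaling $(a,s)\mapsto(ac,sc)$ with $c\in S(0)$) reduces, after applying $\sigma_i$ and $\delta_i$ to the relevant products and using commutativity, to identities of the shape $s\,\delta_i(c)\,a=a\,\delta_i(c)\,s$ in $R$; (b) additivity; and (c) the $\widetilde{\sigma_i}$-Leibniz identity $\widetilde{\delta_i}\big((a/s)(b/t)\big)=\widetilde{\sigma_i}(a/s)\widetilde{\delta_i}(b/t)+\widetilde{\delta_i}(a/s)(b/t)$. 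Both (b) and (c) come out by expanding the formula on a common-denominator representative and invoking $\delta_i(ab)=\sigma_i(a)\delta_i(b)+\delta_i(a)b$. Setting $s=1$ gives $\widetilde{\delta_i}|_R=\delta_i$, so $(\widetilde{\Sigma},\widetilde{\Delta})$ genuinely extends $(\Sigma,\Delta)$. This verification is the one genuinely laborious point of the proposition; it is elementary but uses commutativity of $R$ essentially.

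For (ii) I would use the standard correspondence between ideals of $Q(R)=S(0)^{-1}R$ and $S(0)$-saturated ideals of $R$, via $\cI\mapsto\cI\cap R$ and $I\mapsto IQ(R)$, together with the facts: $\cI=0\iff\cI\cap R=0$ and $I=0\iff IQ(R)=0$ (a nonzero fraction has nonzero numerator since denominators are regular); $(IQ(R))(JQ(R))=(IJ)Q(R)$; and compatibility with invariance in both directions. The ``downward'' direction is immediate from $\widetilde{\sigma_i}|_R=\sigma_i$ and $\widetilde{\delta_i}|_R=\delta_i$: if $\cI$ is $\widetilde{\Sigma}$- (resp.\ $\widetilde{\Delta}$-, $(\widetilde{\Sigma},\widetilde{\Delta})$-) invariant then $\cI\cap R$ is $\Sigma$- (resp.\ $\Delta$-, $(\Sigma,\Delta)$-) invariant. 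For the ``upward'' direction: if $I$ is $\Sigma$-invariant then $\widetilde{\sigma_i}(IQ(R))\subseteq\sigma_i(I)Q(R)\subseteq IQ(R)$; and if $I$ is $\Delta$-invariant then for $a\in I$ and $s\in S(0)$ the formula gives $\widetilde{\delta_i}(a/s)=\delta_i(a)/\sigma_i(s)-\big(\delta_i(s)/(\sigma_i(s)s)\big)a\in IQ(R)$ because $\delta_i(a)\in I$ and $a\in I$, and since $\widetilde{\delta_i}$ is additive and $IQ(R)=\{a/s:a\in I,\ s\in S(0)\}$, this yields $\widetilde{\delta_i}(IQ(R))\subseteq IQ(R)$.

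Granting this dictionary, the equivalence is purely formal and the argument is literally the same for $\Sigma$, for $\Delta$, and for $(\Sigma,\Delta)$. Forward: if $Q(R)$ is $\widetilde{\Sigma}$-prime and $I,J\subseteq R$ are $\Sigma$-invariant with $IJ=0$, then $IQ(R),JQ(R)$ are $\widetilde{\Sigma}$-invariant with product $(IJ)Q(R)=0$, so one of them is $0$, hence $I=0$ or $J=0$, i.e.\ $R$ is $\Sigma$-prime. Converse: if $R$ is $\Sigma$-prime and $\cI,\cJ\subseteq Q(R)$ are $\widetilde{\Sigma}$-invariant with $\cI\cJ=0$, put $I:=\cI\cap R$, $J:=\cJ\cap R$; these are $\Sigma$-invariant and satisfy $IJ\subseteq(\cI\cJ)\cap R=0$, so $I=0$ or $J=0$, hence $\cI=0$ or $\cJ=0$. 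The $\Delta$- and mixed versions carry the extra invariance hypotheses along verbatim and use (i) only to know that $\widetilde{\Delta}$ exists and restricts to $\Delta$. In short, the sole obstacle is the bookkeeping for $\widetilde{\delta_i}$ in (i); part (ii) is then a clean application of the localization machinery.
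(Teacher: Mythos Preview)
Your proposal is correct and follows essentially the same approach as the paper. The paper in fact proves (i) by citation (to a companion paper handling the general Ore-localization case), whereas you spell out the well-definedness and Leibniz verification for $\widetilde{\delta_i}$; for (ii) your argument via the contraction/extension $I\mapsto IQ(R)$, $\cI\mapsto\cI\cap R$ matches the paper's proof line for line.
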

\begin{proof}
(i) This part can be proved not only for commutative rings but also in the noncommutative case (see
\cite{Lezama-OreGoldie}).

(ii) $\Rightarrow )$: Let $I,J$ be $\Sigma$-invariant ideals of $R$ such that $IJ=0$, then
$IJS(0)^{-1}=IS(0)^{-1}JS(0)^{-1}=0$, but note that $IS(0)^{-1},JS(0)^{-1}$ are
$\widetilde{\Sigma}$-invariant, so $IS(0)^{-1}=0$ or $JS(0)^{-1}=0$, i.e., $I=0$ or $J=0$.

$\Leftarrow )$: $K,L$ be two $\widetilde{\Sigma}$-invariant ideals of $Q(R)$ such that $KL=0$, then
$K=IS(0)^{-1}$ and $L=JS(0)^{-1}$, with $I:=\{a\in R|\frac{a}{1}\in K\}$ and $J:=\{b\in
R|\frac{b}{1}\in L\}$. Note that $IJ=0$, moreover, $I,J$ are $\Sigma$-invariant. In fact, if $a\in
I$, then $\frac{a}{1}\in K$ and hence $\widetilde{\sigma_i}(\frac{a}{1})=\frac{\sigma_i(a)}{1}\in
K$, i.e., $\sigma_i(a)\in I$, for every $1\leq i\leq n$. Analogously for $L$. Since $R$ is
$\Sigma$-prime, then $I=0$ or $J=0$, i.e., $K=0$ or $L=0$. The proofs for $\Delta$ and
$(\Sigma,\Delta)$ are analogous.

\end{proof}

Next we will use the following special notation: Let $m\geq 0$ be an integer, then $\sigma(m)$ will
denote the product of $m$ endomorphisms taken from $\Sigma$ in any order, and probably with
repetitions, i.e., $\sigma(m)=\sigma_{i_1}\cdots \sigma_{i_m}$, with $i_1,\dots, i_m\in \{1,\dots,
n\}$. For $m=0$ we will understand that this product is the identical isomorphism of $R$.

\begin{proposition}\label{1.4.14N}
Let $R$ be a commutative ring and $(\Sigma,\Delta)$ a system of endomorphisms and
$\Sigma$-derivations of $R$. Let $I$ be a $\Sigma$-invariant ideal of $R$. Set $I_0:=R$, $I_1:=I$,
and for $j\geq 2$,
\begin{center}
$I_j:=\{r\in I|\delta_{i_1}\sigma(m(1))\delta_{i_2}\sigma(m(2))\cdots
\delta_{i_l}\sigma(m(l))(r)\in I,\ \text{for all}\ l=1,\dots,j-1;\, i_1,\dots, i_l\in \{1,\dots,n\}
\text{and}\ m(1),\dots, m(l)\geq 0\}$.
\end{center}
Then,
\begin{enumerate}
\item[\rm (i)]$I_0\supseteq I_1\supseteq I_2\supseteq \cdots $.
\item[\rm (ii)]$\delta_i(I_j)\subseteq I_{j-1}$, for every $1\leq i\leq n$ and any $j\geq 1$.
\item[\rm (iii)]$I_j$ is a $\Sigma$-invariant ideal of $R$, for any $j\geq 0$.
\item[\rm (iv)]$II_j\subseteq I_{j+1}$, for any $j\geq 0$.
\end{enumerate}
\end{proposition}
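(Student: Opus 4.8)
The plan is to first trade the heavy defining condition for $I_j$ for a one-step recursion. For an integer $k\ge -1$ let $\Theta_k$ denote the set of all operators of the form $\delta_{i_1}\sigma(m(1))\delta_{i_2}\sigma(m(2))\cdots\delta_{i_l}\sigma(m(l))$ with $0\le l\le k$; thus $\Theta_{-1}=\emptyset$ and the identity map of $R$ is the only element of $\Theta_0$. Unravelling the definition, for every $j\ge 0$ one has
\[
I_j=\{\,r\in R:\varphi(r)\in I\ \text{for every}\ \varphi\in\Theta_{j-1}\,\}.
\]
Peeling the rightmost block $\delta_i\circ\sigma(m)$ off each operator gives $\Theta_j=\{\mathrm{id}\}\cup\{\,\psi\circ\delta_i\circ\sigma(m):\psi\in\Theta_{j-1},\ 1\le i\le n,\ m\ge 0\,\}$, and inserting this into the previous formula yields
\[
I_{j+1}=\{\,r\in I:\delta_i(\sigma(m)(r))\in I_j\ \text{for all}\ 1\le i\le n\ \text{and all}\ m\ge 0\,\}\qquad(j\ge 0),
\]
where $\sigma(m)$ ranges over all products of $m$ members of $\Sigma$ and $\sigma(0)=\mathrm{id}$. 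I will refer to this last identity as $(\star)$; the four assertions follow from it quickly.

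For (i): since $\Theta_{j-1}\subseteq\Theta_j$, the displayed characterization of $I_j$ forces $I_{j+1}\subseteq I_j$, and $I_1=I\subseteq R=I_0$. For (ii): let $r\in I_j$ and $1\le i\le n$; for every $\varphi\in\Theta_{j-2}$ the operator $\varphi\circ\delta_i=\varphi\circ\delta_i\circ\sigma(0)$ lies in $\Theta_{j-1}$, so $\varphi(\delta_i(r))\in I$, whence $\delta_i(r)\in I_{j-1}$ (for $j=1$ this just says $\delta_i(r)\in R=I_0$).

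For (iii) and (iv) I induct on $j$, establishing (iii) first and then using it freely in the induction for (iv); both are trivial for $j=0,1$ since $I_0=R$, $I_1=I$ (the latter $\Sigma$-invariant by hypothesis) and $II_0=IR=I=I_1$. Assume $I_j$ is a $\Sigma$-invariant ideal with $j\ge 1$. By $(\star)$, $I_{j+1}$ is an additive subgroup, because $\delta_i$ and $\sigma(m)$ are additive and $I_j$ is. For absorption take $r\in I_{j+1}$, $a\in R$; since $\sigma(m)$ is a ring endomorphism and $\delta_i$ a $\sigma_i$-derivation,
\[
\delta_i(\sigma(m)(ar))=\sigma_i\bigl(\sigma(m)(a)\bigr)\,\delta_i\bigl(\sigma(m)(r)\bigr)+\delta_i\bigl(\sigma(m)(a)\bigr)\,\sigma(m)(r).
\]
The first term lies in $I_j$ because $\delta_i(\sigma(m)(r))\in I_j$ by $(\star)$ and $I_j$ is an ideal; the second lies in $I_j$ because $r\in I_{j+1}\subseteq I_j$ and $I_j$ is $\Sigma$-invariant, so $\sigma(m)(r)\in I_j$. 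Hence $ar\in I_{j+1}$ by $(\star)$. Moreover $I_{j+1}$ is $\Sigma$-invariant: for $r\in I_{j+1}$ and $1\le k\le n$ we have $\sigma_k(r)\in I$ and $\delta_i(\sigma(m)(\sigma_k(r)))=\delta_i(\sigma(m+1)(r))\in I_j$, with $\sigma(m+1):=\sigma(m)\circ\sigma_k$, so $\sigma_k(r)\in I_{j+1}$. Now assume further that $II_{j-1}\subseteq I_j$; for $a\in I$, $r\in I_j$, the same Leibniz identity shows the first term lies in $I\cdot I_{j-1}\subseteq I_j$ (using $\sigma_i(\sigma(m)(a))\in I$, $\delta_i(\sigma(m)(r))\in I_{j-1}$ by $(\star)$ for $I_j$, and the hypothesis) and the second lies in $I_j$ (since $\sigma(m)(r)\in I_j$ by (iii)), so $ar\in I_{j+1}$ by $(\star)$; this gives $II_j\subseteq I_{j+1}$.

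The one genuine obstacle is $(\star)$ itself: organizing the strings of composed $\delta_i$'s and $\sigma$'s so that precisely one outer layer can be stripped off and matched against the definition of $I_j$. After that, the proof is just the $\sigma_i$-Leibniz rule together with the observation that absorption in $I_{j+1}$ requires $I_j$ to be simultaneously an ideal, $\Sigma$-invariant, and --- for part (iv) --- to absorb $I$; this dependency is why (iii) must be in hand before (iv).
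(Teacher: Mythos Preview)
Your proof is correct and follows essentially the same route as the paper's: both establish (iii) and (iv) by induction on $j$, applying the $\sigma_i$-Leibniz rule to $\delta_i(\sigma(m)(ar))$ and using the $\Sigma$-invariance of $I_j$ together with (ii). Your explicit recursion $(\star)$ is exactly the observation the paper uses implicitly when it checks ``$ra\in I_j$'' and then verifies the length-$j$ condition separately; packaging it as $(\star)$ streamlines the bookkeeping but does not change the argument.
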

\begin{proof}
(i) This is evident.

(ii) It is clear that for every $i$, $\delta_i(I_1)\subseteq I_0$. Let $j\geq 2$ and let $r\in
I_j$, then
\begin{center}
$\delta_{i_1}\sigma(m(1))\delta_{i_2}\sigma(m(2))\cdots \delta_{i_l}\sigma(m(l))(r)\in
I$, for all $l=1,\dots, j-1$.
\end{center}
From this obtain that
$\delta_{i_1}\sigma(m(1))\delta_{i_2}\sigma(m(2))\cdots
\delta_{i_l}\sigma(m(l))\delta_i\sigma(0)(r)\in I$ for all $l=1,\dots, j-2$. This means that
$\delta_i(r)\in I_{j-1}$.

(iii) It is clear that $I_0,I_1$ are $\Sigma$-invariant ideals. Let $j\geq 2$ and let $r\in I_j$,
then for every $\sigma_i$ we have
\begin{center}
$\delta_{i_1}\sigma(m(1))\delta_{i_2}\sigma(m(2))\cdots
\delta_{i_l}\sigma(m(l))(\sigma_i(r))=\delta_{i_1}\sigma(m(1))\delta_{i_2}\sigma(m(2))\cdots
\delta_{i_l}\sigma(m(l)+1)(r)\in I$.
\end{center}
This means that $\sigma_i(r)\in I_j$, i.e., $I_j$ is $\Sigma$-invariant. Let us prove that $I_j$ is
an ideal of $R$. By induction we assume that $I_j$ is an ideal. It is obvious that if $a,a'\in
I_{j+1}$, then $a+a'\in I_{j+1}$; let $r\in R$, then $a\in I_j$ and hence $ra\in I_j$, therefore
$\delta_{i_1}\sigma(m(1))\delta_{i_2}\sigma(m(2))\cdots \delta_{i_k}\sigma(m(k))(ra)\in I$ for all
$k<j$. Consider any $\sigma(m(j))$, we have $\sigma(m(j))(a)\in I_{j+1}$, so
$\sigma(m(j))(a),\delta_i(\sigma(m(j))(a))\in I_j$ for every $i$. Therefore,
\begin{center}
$\delta_i(\sigma(m(j))(ra))=\delta_i(\sigma(m(j))(r)\sigma(m(j))(a))=\sigma_i\sigma(m(j))(r)\delta_i(\sigma(m(j))(a))+
\delta_i(\sigma(m(j))(r))\sigma(m(j))(a)\in I_j$.
\end{center}
This implies that $\delta_{i_1}\sigma(m(1))\delta_{i_2}\sigma(m(2))\cdots
\delta_{i_j}\sigma(m(j))(ra)\in I$. This means that $ra\in I_{j+1}$. This proves that $I_{j+1}$ is
an ideal.

(iv) Of course $II_0\subseteq I_1$. Let $j\geq 1$, suppose that for $II_{j-1}\subseteq I_j$, we
have to prove that $II_j\subseteq I_{j+1}$. Let $a\in I$ and $b\in I_j$, then $b\in I_{j-1}$ and
$ab\in I_j$. Therefore, $\delta_{i_1}\sigma(m(1))\delta_{i_2}\sigma(m(2))\cdots
\delta_{i_k}\sigma(m(k))(ab)\in I$ for $i<j$. For any $\sigma(m(j))$ and every $\delta_i$ we have,
as above,
\begin{center}
$\delta_i(\sigma(m(j))(ab))=\delta_i(\sigma(m(j))(a)\sigma(m(j))(b))=\sigma_i\sigma(m(j))(a)\delta_i(\sigma(m(j))(b))+
\delta_i(\sigma(m(j))(a))\sigma(m(j))(b)\in II_{j-1}+RI_j\subseteq I_j$.
\end{center}
From this we conclude that $\delta_{i_1}\sigma(m(1))\delta_{i_2}\sigma(m(2))\cdots
\delta_{i_j}\sigma(m(j))(ab)\in I$, and this means that $ab\in I_{j+1}$. This completes the proof.
\end{proof}

\begin{proposition}\label{1.4.15}
Let $R$ be a commutative Noetherian ring and $\Sigma$ a system of automorphisms of $R$. Then, any
$\Sigma$-prime ideal of $R$ is semiprime.
\end{proposition}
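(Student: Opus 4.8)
The plan is to reduce to the case $I=0$ and then analyze the nilradical. First I would pass to the quotient $\overline{R}:=R/I$. Since $I$ is $\Sigma$-prime it is in particular $\Sigma$-invariant, and because $R$ is Noetherian and each $\sigma_i$ is an automorphism, the ascending chain $I\subseteq \sigma_i^{-1}(I)\subseteq \sigma_i^{-2}(I)\subseteq \cdots$ stabilizes, forcing $\sigma_i(I)=I$ for every $i$. Hence, by Proposition \ref{1.4.2b}, $\overline{R}$ carries an induced system $\overline{\Sigma}$ of automorphisms, $I$ is $\Sigma$-prime if and only if $\overline{R}$ is $\overline{\Sigma}$-prime, and $\overline{R}$ is again commutative and Noetherian. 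Since an ideal $I$ of a commutative ring is semiprime precisely when $\overline{R}$ is reduced, i.e. has zero nilradical, it suffices to prove: if $R$ is a commutative Noetherian $\Sigma$-prime ring, with $\Sigma$ a system of automorphisms, then the nilradical $N$ of $R$ is zero.

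Next I would invoke two standard facts. First, because $R$ is commutative and Noetherian, $N$ is nilpotent, say $N^k=0$ for some $k\geq 1$. Second, each $\sigma_i$, being a ring endomorphism, carries nilpotent elements to nilpotent elements, so $\sigma_i(N)\subseteq N$; thus $N$ is a $\Sigma$-invariant ideal, and likewise every power $N^j$ is $\Sigma$-invariant since $\sigma_i(N^j)\subseteq \sigma_i(N)^j\subseteq N^j$.

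Now suppose $N\neq 0$ and take $k$ minimal with $N^k=0$; then $k\geq 2$ and $N^{k-1}\neq 0$. From $N\cdot N^{k-1}=N^k=0$ and the $\Sigma$-invariance of both factors, $\Sigma$-primeness of $R$ yields $N=0$ or $N^{k-1}=0$; since $N\neq 0$, we get $N^{k-1}=0$, contradicting minimality of $k$ (equivalently, one iterates $N\cdot N^{j}=N^{j+1}=0$ downward until reaching $N=0$). Hence $N=0$, so $R$ is semiprime, and unwinding the reduction shows that the original ideal $I$ is a semiprime ideal of $R$.

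The argument is essentially bookkeeping once the right objects are identified, so I do not expect a serious obstacle. The only points that genuinely use the hypotheses beyond the definitions are the nilpotency of the nilradical of a Noetherian commutative ring, and the fact that the automorphisms fix $I$, which is needed so that $\overline{\Sigma}$ is again a system of automorphisms and Proposition \ref{1.4.2b} applies. One should also confirm that "$\Sigma$-prime" in Definition \ref{1.4.1} is phrased in terms of products of $\Sigma$-invariant ideals, so that applying it to the pair $N$, $N^{k-1}$ is legitimate.
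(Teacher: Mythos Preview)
Your proof is correct and follows essentially the same approach as the paper: both use that the nilradical is nilpotent (by Noetherianity) and $\Sigma$-invariant (since automorphisms preserve nilpotents), and then apply $\Sigma$-primeness to force it to vanish. The only cosmetic difference is that the paper works directly with $\sqrt{I}$ and the inclusion $(\sqrt{I})^m\subseteq I$, whereas you first pass to $R/I$ and argue with the nilradical $N$ there; under the correspondence $\sqrt{I}/I\cong N(R/I)$ these are the same argument, and your extra verification that $\sigma_i(I)=I$ (so that the induced maps are again automorphisms) is correct but not strictly needed for the core step.
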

\begin{proof}
Let $I$ be a $\Sigma$-prime ideal of $R$. Since $R$ is Noetherian, $\sqrt{I}$ is finitely
generated, so there exists $m\geq 1$ such that $(\sqrt{I})^m\subseteq I$. Since $I$ is
$\Sigma$-invariant and $R$ is Noetherian, $\sqrt{I}$ is $\Sigma$-invariant, whence
$\sqrt{I}\subseteq I$, i.e., $\sqrt{I}=I$. This means that $I$ is intersection of prime ideals,
i.e., $I$ is semiprime.
\end{proof}

\begin{proposition}\label{1.4.13aN}
Let $R$ be a commutative Noetherian ring and $(\Sigma,\Delta)$ a system of automorphisms and
$\Sigma$-derivations of $R$. Let $rad(R)$ be the prime radical of $R$. If $R$ is
$(\Sigma,\Delta)$-prime, then
\begin{enumerate}
\item[\rm (i)]$rad(R)$ is $\Sigma$-prime.
\item[\rm (ii)]$S(0)=S(rad(R))$.
\item[\rm (iii)]$Q(R)$ is Artinian.
\end{enumerate}
\end{proposition}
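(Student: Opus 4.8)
The plan is to exploit the Noetherian hypothesis together with the automorphism hypothesis to move the $(\Sigma,\Delta)$-primeness of $R$ onto $rad(R)$, and then to read off the statements about regular elements and the total ring of fractions.

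For part (i), first I would observe that $rad(R)$ is automatically $\Sigma$-invariant: since $R$ is commutative and Noetherian, $rad(R)=\sqrt{0}$ is the (finitely generated) nilradical, and each $\sigma_i$ being a ring endomorphism sends nilpotents to nilpotents, so $\sigma_i(rad(R))\subseteq rad(R)$. (With $\sigma_i$ an automorphism one actually gets $\sigma_i(rad(R))=rad(R)$.) The hard part is that $rad(R)$ need not be $\Delta$-invariant, so to get $\Sigma$-primeness of $rad(R)$ I cannot simply quote Proposition \ref{1.4.2b}. Instead I would argue directly: suppose $J,K$ are $\Sigma$-invariant ideals with $JK\subseteq rad(R)$ but $J\not\subseteq rad(R)$ and $K\not\subseteq rad(R)$. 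Using that $R$ is Noetherian, pass to radicals; $\sqrt{J}$ and $\sqrt{K}$ are again $\Sigma$-invariant (same nilpotent-preservation argument applied inside $R/J$, $R/K$, or directly), and $\sqrt{J}\,\sqrt{K}\subseteq\sqrt{JK}\subseteq\sqrt{rad(R)}=rad(R)$. Now I would bring in the derivations via the chain $I_0\supseteq I_1\supseteq\cdots$ of Proposition \ref{1.4.14N} built from $I:=rad(R)$: since $R$ is Noetherian this descending chain stabilizes, say $I_j=I_\infty$ for all large $j$, and by construction $I_\infty$ is $\Sigma$-invariant, $\Delta$-invariant (by part (ii) of that Proposition, $\delta_i(I_\infty)=\delta_i(I_{j+1})\subseteq I_j=I_\infty$), and contained in $rad(R)$, with $I\cdot I_\infty\subseteq I_\infty$. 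The point of $I_\infty$ is that it is the largest $(\Sigma,\Delta)$-invariant ideal inside $rad(R)$. If $rad(R)\ne 0$ then $I_\infty$ will turn out to be nonzero: pick a nonzero $a\in rad(R)$ of minimal nilpotency index; among all the elements obtained by applying strings $\delta_{i_1}\sigma(m(1))\cdots$ to $a$, the ones that land in $rad(R)$ form (after closing up) a nonzero $(\Sigma,\Delta)$-invariant ideal, and one checks it sits inside $I_\infty$. Then $I_\infty\cdot I_\infty\subseteq rad(R)$ forces, by nilpotence, no contradiction by itself — so here I would instead use $(\Sigma,\Delta)$-primeness of $R$ applied to $I_\infty$: if $rad(R)\neq 0$, take a minimal-index nilpotent and produce two nonzero $(\Sigma,\Delta)$-invariant ideals whose product is $0$, contradicting that $R$ is $(\Sigma,\Delta)$-prime; hence $rad(R)=0$, and then $rad(R)=0$ is trivially $\Sigma$-prime because $0$ is $(\Sigma,\Delta)$-prime and every $\Sigma$-invariant ideal is in particular relevant only through products landing in $0$. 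Thus in fact the real content is $rad(R)=0$, i.e. $R$ is semiprime, after which (i) is immediate.

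Given (i)—equivalently $rad(R)=0$—parts (ii) and (iii) are short. For (ii): always $S(rad(R))\supseteq S(0)$ is false in general, but with $rad(R)=0$ we have $R/rad(R)=R$, so $S(0)=S(rad(R))$ trivially; if one prefers not to invoke $rad(R)=0$ and instead keeps $rad(R)$ as a genuinely nonzero ideal, then the inclusion $S(0)\subseteq S(rad(R))$ is clear (a nonzerodivisor stays a nonzerodivisor mod a nil ideal — its image, if a zerodivisor, would be killed by some element, and lifting gives $ab\in rad(R)$ nilpotent, forcing $ab=0$ by a minimal-index argument on the nilpotent, contradicting $a\in S(0)$), and conversely $S(rad(R))\subseteq S(0)$ because if $a\bar{}$ is regular mod the nilradical and $ab=0$ then $\bar a\bar b=0$ so $\bar b=0$, i.e. $b$ is nilpotent, and then $b=0$ by choosing $b$ of minimal index and using $a\in S(rad(R))$ once more. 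For (iii): $R$ semiprime commutative Noetherian has only finitely many minimal primes $P_1,\dots,P_k$ with $P_1\cap\cdots\cap P_k=0$, and $Q(R)$ is the localization at $S(0)$, which by (ii) equals $R\setminus(P_1\cup\cdots\cup P_k)$; standard commutative algebra then identifies $Q(R)\cong\prod_{i=1}^k Q(R/P_i)$, a finite product of fields (each $R/P_i$ being a Noetherian domain whose nonzero elements are all inverted), hence $Q(R)$ is Artinian.

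The main obstacle is part (i): the subtle point is that $rad(R)$ is not a priori $(\Sigma,\Delta)$-invariant, so the clean quotient argument of Proposition \ref{1.4.2b} is unavailable, and one must instead manufacture, out of a hypothetical nonzero nilpotent, an honest pair of nonzero $(\Sigma,\Delta)$-invariant ideals with vanishing product — this is exactly what the descending chain $I_j$ of Proposition \ref{1.4.14N} is designed to control, and the Noetherian hypothesis is what makes that chain stabilize. Everything after "$rad(R)=0$" is routine commutative algebra.
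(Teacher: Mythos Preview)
Your central claim in part (i) --- that $(\Sigma,\Delta)$-primeness of $R$ forces $rad(R)=0$ --- is false, and this invalidates the rest of the proposal. Take $R=k[x]/(x^p)$ with $k$ a field of characteristic $p>0$, $\Sigma=\{i_R\}$, and $\Delta=\{\delta\}$ with $\delta=\frac{d}{dx}$ (this descends since $\delta(x^p)=px^{p-1}=0$). The proper ideals of $R$ are $(x^i)$, $1\le i\le p$, and none except $0$ is $\delta$-invariant because $\delta(x^i)=ix^{i-1}\notin(x^i)$ for $1\le i\le p-1$. Hence $R$ is $(\Sigma,\Delta)$-prime, yet $rad(R)=(x)\neq 0$. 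Two specific steps in your argument break: the descending chain $I_0\supseteq I_1\supseteq\cdots$ does \emph{not} stabilize merely because $R$ is Noetherian (Noetherian is ACC, not DCC); and your construction of a nonzero $(\Sigma,\Delta)$-invariant ideal inside $rad(R)$ from a minimal-index nilpotent is exactly what fails in the example above, where no such ideal exists.

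The paper's route is quite different. It first picks, by Noetherianity, a \emph{maximal} $\Sigma$-invariant ideal $I$ with $Ann_R(I)\neq 0$ and shows directly that such an $I$ is $\Sigma$-prime. It then builds the chain $I_j$ of Proposition~\ref{1.4.14N} from this $I$ and applies the Noetherian hypothesis to the \emph{ascending} chain $Ann_R(I_0)\subseteq Ann_R(I_1)\subseteq\cdots$, which stabilizes at some $m$. A computation shows both $Ann_R(I_m)$ and $Ann_R(Ann_R(I_m))$ are $(\Sigma,\Delta)$-invariant; $(\Sigma,\Delta)$-primeness of $R$ then forces $I_m=0$, whence $I^m\subseteq I_m=0$, so $I\subseteq rad(R)$, and since $I$ is $\Sigma$-prime hence semiprime (Proposition~\ref{1.4.15}), $I=rad(R)$. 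Part (ii) is then proved --- with $rad(R)$ genuinely possibly nonzero --- by showing each factor $I_j/I_{j+1}$ is $S(rad(R))$-torsion free via induction, which is where the structure of the chain is really used; your backup argument for $S(rad(R))\subseteq S(0)$ (``$b$ nilpotent $\Rightarrow b=0$ by minimal index'') does not work and does not use the hypothesis. Part (iii) then follows from Small's theorem, not from semiprimeness of $R$.
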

\begin{proof}
(i) The set of $\Sigma$-invariant ideals $I$ of $R$ such that $Ann_R(I)\neq 0$ is not empty since
$0$ satisfies these conditions. Since $R$ is assumed to be Noetherian, let $I$ be maximal with
these conditions. Let $K,L$ be $\Sigma$-invariant ideals of $R$ such that $I\varsubsetneq K$ and
$I\varsubsetneq L$, then $Ann_R(K)=0=Ann_R(L)$, and hence $Ann_R(KL)=0$. This implies that
$KL\nsubseteq I$. This proves that $I$ is $\Sigma$-prime.

We will prove that $I=rad(R)$. By Proposition \ref{1.4.14N}, we have the descending chain of
$\Sigma$-invariant ideals $I_0\supseteq I_1\supseteq \cdots$, and the ascending chain
$Ann_R(I_0)\subseteq Ann_R(I_1)\subseteq \cdots $. There exists $m\geq 1$ such that
$Ann_R(I_m)=Ann_R(I_{m+1})$ (since $I_0=R$ and $I_1=I$, see Proposition \ref{1.4.14N}, $m\neq 0$).
Note that $Ann_R(I_m)$ is $\Sigma$-invariant since $\sigma_i(I_m)=I_m$ for every $i$ (here we have
used again that $R$ is Noetherian, Proposition \ref{1.4.2a}). Let $b\in Ann_R(I_m)$. For $a\in
I_{m+1}$ we have $a\in I_m$, moreover, by Proposition \ref{1.4.14N}, for every $i$, $\delta_i(a)\in
I_m$, so $ab=0=\delta_i(a)b$, therefore $0=\delta_i(ab)=\sigma_i(a)\delta_i(b)$. From this we
obtain that $\sigma_i(I_{m+1})\delta_i(b)=0$, i.e., $I_{m+1}\delta_i(b)=0$. Thus, $\delta_i(b)\in
Ann_R(I_{m+1})=Ann_R(I_m)$. We have proved that $Ann_R(I_m)$ is $(\Sigma,\Delta)$-invariant.

Let $H:=Ann_R(Ann_R(I_m))$, we shall see that $H$ is also $(\Sigma,\Delta)$-invariant. In fact, let
$x\in H$, then $xAnn_R(I_m)=0$ and for every $i$ we have
$\sigma_i(x)\sigma_i(Ann_R(I_m))=0=\sigma_i(x)Ann_R(I_m)$, thus $\sigma_i(x)\in H$. Now let $y\in
Ann_R(I_m)$, then $xy=0$ and for every $i$ we have
$\delta_i(xy)=0=\sigma_i(x)\delta_i(y)+\delta_i(x)y$, but $\delta_i(y)\in Ann_R(I_m)$, so
$\sigma_i(x)\delta_i(y)=0$. Thus, $\delta_i(x)y=0$, i.e., $\delta_i(x)\in H$.

Since $R$ is $(\Sigma,\Delta)$-prime and $HAnn_R(I_m)=0$, then $H=0$ or $Ann_R(I_m)=0$. From
Proposition \ref{1.4.14N}, $Ann_R(I_m)\supseteq Ann_R(I)\neq 0$, whence $H=0$. Since $I_m\subseteq
H$, then $I_m=0$. Again, from Proposition \ref{1.4.14N} we obtain that $I^m\subseteq I_m$, so
$I^m=0$, and hence $I\subseteq rad(R)$. On the other hand, since $I$ is $\Sigma$-prime, then $I$ is
semiprime (Proposition \ref{1.4.15}), but $rad(R)$ is the smallest semiprime ideal of $R$, whence
$I=rad(R)$. Thus, $rad(R)$ is $\Sigma$-prime.

(ii) The inclusion $S(0)\subseteq S(rad(R))$ is well known (see \cite{McConnell}, Proposition
4.1.3). The other inclusion is equivalent to prove that $R$ is $S(rad(R))$-torsion free. Since
$I_m=0$, it is enough to prove that every factor $I_j/I_{j+1}$ is $S(rad(R))$-torsion free. In
fact, in general, if $M$ is an $R$-module and $N$ is an submodule of $M$ such that $M/N$ and $N$
are $S$-torsion free (with $S$ an arbitrary system of $R$), then $M$ is $S$-torsion free. Thus, the
assertion follows from
\begin{center}
$R=I_0/I_m$, $I_0/I_2/I_1/I_2\cong I_0/I_1$, $I_0/I_3/I_2/I_3\cong I_0/I_2$, $\dots,$
$I_0/I_m/I_{m-1}/I_m\cong I_0/I_{m-1}$.
\end{center}
$I_0/I_1=R/rad(R)$, it is clearly $S(rad(R))$-torsion free. By induction, we assume that
$I_{j-1}/I_j$ is $S(rad(R))$-torsion free. Let $a\in I_j$ and $r\in S(rad(R))$ such that
$r\widehat{a}=\widehat{0}$ in $I_j/I_{j+1}$, i.e., $ra\in I_{j+1}$. From Proposition \ref{1.4.14N}
we get that for every $i$ and any $\sigma(m)$, $\sigma(m)(a)\in I_j$, $\sigma(m)(ra)\in I_{j+1}$
and $\delta_i(\sigma(m)(ra))\in I_j$, then
\begin{center}
$\delta_i(\sigma(m)(ra))=\delta_i(\sigma(m)(r)\sigma(m)(a))=\sigma_i(\sigma(m)(r))\delta_i(\sigma(m)(a))
+\delta_i(\sigma(m)(r))\sigma(m)(a)\in I_j$,
\end{center}
whence, $\sigma_i(\sigma(m)(r))\delta_i(\sigma(m)(a))\in I_j$. For every $k$,
$\sigma_k(rad(R))=rad(R)$, then we can prove that $\sigma_k(S(rad(R)))=S(rad(R))$, so
$\sigma_i(\sigma(m)(r))\in S(rad(R))$; moreover, $\delta_i(\sigma(m)(a))\in I_{j-1}$, then by
induction $\delta_i(\sigma(m)(a))\in I_j$. But this is valid for any $i$ and any $\sigma(m)$, then
$a\in I_{j+1}$. This proves that $I_{j-1}/I_j$ is $S(rad(R))$-torsion free.

(iii) This follows from (ii) and Small's Theorem (see \cite{McConnell}, Corollary 4.1.4).
\end{proof}

\begin{corollary}\label{5.2.4aN}
Let $R$ be a commutative Noetherian semiprime ring and $(\Sigma,\Delta)$ a system of automorphisms
and $\Sigma$-derivations of $R$. If $R$ is $(\Sigma,\Delta)$-prime, then $R$ is $\Sigma$-prime.

\end{corollary}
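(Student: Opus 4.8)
The plan is to obtain this as the immediate special case $rad(R)=0$ of Proposition~\ref{1.4.13aN}(i). Recall that, by definition, a semiprime ring is one whose prime radical $rad(R)$ is zero. Proposition~\ref{1.4.13aN}(i) already tells us that for a commutative Noetherian ring $R$ carrying a system $(\Sigma,\Delta)$ of automorphisms and $\Sigma$-derivations, $(\Sigma,\Delta)$-primeness of $R$ forces $rad(R)$ to be a $\Sigma$-prime ideal of $R$. Specializing to the case at hand, $rad(R)=0$, so the conclusion becomes: the zero ideal of $R$ is $\Sigma$-prime, which by Definition~\ref{1.4.1}(ii) says exactly that $R$ is a $\Sigma$-prime ring. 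So the proof is a one-line invocation, and I expect no real obstacle.

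If instead one wanted to argue from scratch, I would essentially replay the proof of Proposition~\ref{1.4.13aN}(i): suppose $R$ is not $\Sigma$-prime; using that $R$ is Noetherian, choose an ideal $I$ maximal among the $\Sigma$-invariant ideals with $Ann_R(I)\neq 0$, so that $I$ is $\Sigma$-prime, hence semiprime by Proposition~\ref{1.4.15}. Then deploy the descending chain $I_0\supseteq I_1\supseteq\cdots$ of $\Sigma$-invariant ideals from Proposition~\ref{1.4.14N} together with the ascending chain of annihilators $Ann_R(I_j)$; for an index $m$ at which the latter stabilizes, one checks, using $\delta_i(I_j)\subseteq I_{j-1}$ and the product rule, that $Ann_R(I_m)$ and then $H:=Ann_R(Ann_R(I_m))$ are $(\Sigma,\Delta)$-invariant. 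The $(\Sigma,\Delta)$-primeness of $R$ applied to $H\,Ann_R(I_m)=0$ gives $H=0$, hence $I_m=0$, hence $I^m=0$ (since $I^m\subseteq I_m$), contradicting semiprimeness of $R$.

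The genuinely substantive point — already absorbed into Proposition~\ref{1.4.13aN} — is that $(\Sigma,\Delta)$-primeness is a weaker hypothesis than $\Sigma$-primeness, since it only tests ideals invariant under $\Sigma$ and $\Delta$ simultaneously; so the real work lies in manufacturing $(\Sigma,\Delta)$-invariant witnesses out of the given $\Sigma$-invariant ideals, which is exactly what the derived chain $I_j$ and the annihilator computations accomplish. Given those prerequisites, the corollary itself is purely formal.
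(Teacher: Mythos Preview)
Your one-line argument is correct: Proposition~\ref{1.4.13aN}(i) applies directly to $R$ (it only needs $R$ commutative Noetherian with a system of automorphisms and $\Sigma$-derivations, and $(\Sigma,\Delta)$-primeness), and since $R$ is semiprime we have $rad(R)=0$, so the conclusion ``$rad(R)$ is $\Sigma$-prime'' becomes ``$0$ is $\Sigma$-prime'', i.e., $R$ is $\Sigma$-prime.

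This is in fact more direct than the paper's own proof, which takes a detour through the total ring of fractions: the paper first invokes Proposition~\ref{1.4.13aN}(iii) to get that $Q(R)$ is Artinian, then uses Proposition~\ref{1.4.12}(ii) to transfer the problem to $Q(R)$, and only then applies Proposition~\ref{1.4.13aN}(i) to the Artinian ring $Q(R)$ (where again $rad=0$ because $Q(R)$ is semiprime). Your observation that part (i) already applies to $R$ itself makes the passage to $Q(R)$ unnecessary; the Artinian reduction buys nothing here since semiprimeness of $R$ alone forces $rad(R)=0$. Your alternative ``from scratch'' sketch is also fine --- it is just the proof of Proposition~\ref{1.4.13aN}(i) specialized to the semiprime case, where the final step $I^m=0$ immediately gives $I=0$, contradicting the assumption that $R$ is not $\Sigma$-prime (which guarantees a nonzero such $I$).
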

\begin{proof}
By Proposition \ref{1.4.13aN}, the total ring of fractions $Q(R)$ of $R$ is Artinian. Then, by
Proposition \ref{1.4.12}, we can assume that $R$ is Artinian. Applying again Proposition
\ref{1.4.13aN}, we get that $0=rad(R)$ is $\Sigma$-prime, i.e., $R$ is $\Sigma$-prime.

\end{proof}

\begin{theorem}\label{5.2.4N}
Let $R$ be a commutative Noetherian semiprime ring and $A$ a bijective skew $PBW$ extension of $R$.
Let $I$ be a semiprime $(\Sigma,\Delta)$-invariant ideal of $R$. $I$ is a $(\Sigma,\Delta)$-prime
ideal of $R$ if and only if $IA$ is a prime ideal of $A$. In such case, $IA=AI$ and $I=IA\cap R$.
\end{theorem}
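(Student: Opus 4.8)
The plan is to follow the pattern of the proof of Lemma \ref{3.3.3b}: first pass to the quotient $R/I$ to reduce to the case $I=0$, then establish the two implications separately. The only genuinely new ingredient beyond what was used there is Corollary \ref{5.2.4aN}, which allows us to replace $(\Sigma,\Delta)$-primality by $\Sigma$-primality.

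For the reduction, note that since $R$ is Noetherian and every $\sigma_i$ is bijective, Proposition \ref{1.4.2a}(iv) gives $\sigma_i(I)=I$ for all $i$; hence $IA=AI$, $IA\cap R=I$, and $\overline{A}:=A/IA$ is a bijective skew $PBW$ extension of $\overline{R}:=R/I$. The ring $\overline{R}$ is again commutative and Noetherian, and it is semiprime because $I$ is semiprime. By Proposition \ref{1.4.2b}(i) the induced system $(\overline{\Sigma},\overline{\Delta})$ consists of automorphisms and $\overline{\Sigma}$-derivations, and by Proposition \ref{1.4.2b}(iii), $I$ is $(\Sigma,\Delta)$-prime if and only if $\overline{R}$ is $(\overline{\Sigma},\overline{\Delta})$-prime. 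Since $IA$ is a two-sided ideal of $A$, it is prime exactly when $\overline{A}$ is a prime ring. Thus it suffices to prove: if $R$ is commutative Noetherian semiprime and $A$ is a bijective skew $PBW$ extension of $R$, then $R$ is $(\Sigma,\Delta)$-prime if and only if $A$ is prime.

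For the implication from $R$ to $A$: assume $R$ is $(\Sigma,\Delta)$-prime, so by Corollary \ref{5.2.4aN} $R$ is $\Sigma$-prime. We verify the hypothesis of Proposition \ref{5.2.6N}. Given $a,b\in R-\{0\}$, let $L$ be the ideal of $R$ generated by $\{\sigma^{\theta}(b)\mid \theta\in\mathbb{N}^n\}$; then $L\neq 0$. Because $R$ is commutative, $A$ is $\Sigma$-commutative by Proposition \ref{1.4.21}, so $L$ is $\Sigma$-invariant; and because $R$ is Noetherian and $A$ is bijective, $\sigma_i(L)=L$ for every $i$ by Proposition \ref{1.4.2a}(iv). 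Hence $Ann_R(L)$ is $\Sigma$-invariant, and from $Ann_R(L)\,L=0$ with $0$ being $\Sigma$-prime and $L\neq 0$ we conclude $Ann_R(L)=0$, so $aL\neq 0$. Therefore there is $\theta\in\mathbb{N}^n$ with $a\sigma^{\theta}(b)\neq 0$, whence $aR\sigma^{\theta}(b)\neq 0$, and Proposition \ref{5.2.6N} shows that $A$ is prime. For the converse we argue by contraposition as in Lemma \ref{3.3.3b}: if $R$ is not $(\Sigma,\Delta)$-prime, pick nonzero $(\Sigma,\Delta)$-invariant ideals $K,J$ of $R$ with $KJ=0$; since $R$ is Noetherian and the $\sigma_i$ bijective we get $\sigma_i(K)=K$ and $\sigma_i(J)=J$, hence $KA=AK$ and $JA=AJ$ (Proposition \ref{1.4.2a}), both nonzero since $KA\cap R=K$ and $JA\cap R=J$, and then $(KA)(JA)=K(AJ)A=K(JA)A=(KJ)A=0$, so $A$ is not prime. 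The equalities $IA=AI$ and $I=IA\cap R$ were already obtained in the reduction step.

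The main obstacle is not in the theorem itself but lives in the already-proved Corollary \ref{5.2.4aN} (and hence in Propositions \ref{1.4.14N} and \ref{1.4.13aN}, the Artinian total-ring-of-fractions argument and Small's Theorem). Within the present proof, the only point requiring care is checking that the passage to $R/I$ preserves all the standing hypotheses --- commutativity, the Noetherian property, semiprimeness --- and really produces a system of automorphisms and $\overline{\Sigma}$-derivations on the quotient, all of which is covered by Propositions \ref{1.4.2a} and \ref{1.4.2b}.
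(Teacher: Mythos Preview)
Your proof is correct and follows essentially the same approach as the paper's: the same reduction to $I=0$ via Propositions \ref{1.4.2a} and \ref{1.4.2b}, the same use of Corollary \ref{5.2.4aN} to obtain $\Sigma$-primality, the same annihilator argument on the ideal $L$ generated by the $\sigma^\theta(b)$, the same invocation of Proposition \ref{5.2.6N}, and the same contrapositive for the converse. You are in fact slightly more careful than the paper in justifying why $KA=AK$ and $JA=AJ$ in the backward direction and in noting explicitly that $R/I$ inherits semiprimeness.
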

\begin{proof}
The proof is exactly as in Lemma \ref{3.3.3b}, anyway we will repeat it. By Proposition
\ref{1.4.2a}, $IA=AI$ is a proper ideal of $A$, $I=IA\cap R$ and $\overline{A}:=A/IA$ is a
bijective skew $PBW$ extension of the commutative Noetherian semiprime ring $\overline{R}:=R/I$. In
addition, observe that $I$ is $(\Sigma,\Delta)$-prime if and only if $\overline{R}$ is
$(\overline{\Sigma},\overline{\Delta})$-prime (see Propositions \ref{1.4.2b} and \ref{1.4.2a}).
Thus, we can assume that $I=0$, and hence, we have to prove that $R$ is $(\Sigma,\Delta)$-prime if
and only if $A$ is prime.

$\Rightarrow)$: From Corollary \ref{5.2.4aN}, we know that $R$ is $\Sigma$-prime, i.e., $0$ is
$\Sigma$-prime. Let $L$ be the ideal generated by the elements $\sigma^{\theta}(b)$, $\theta\in
\mathbb{N}^n$; observe that $L\neq 0 $, and since $A$ is $\Sigma$-commutative (Proposition
\ref{1.4.21}), $L$ is $\Sigma$-invariant. But $R$ is Noetherian and $A$ is bijective, then
$\sigma_i(L)=L$ for every $1\leq i\leq n$ (see Proposition \ref{1.4.2a}). This implies that
$Ann_R(L)$ is $\Sigma$-invariant, therefore $Ann_R(L)=0$. Thus, $aL\neq 0$, so there exists
$\theta\in \mathbb{N}^n$ such that $a\sigma^\theta(b)\neq 0$. From Proposition \ref{5.2.6N} we get
that $A$ is prime.

$\Leftarrow)$: Note that if $R$ is not $(\Sigma,\Delta)$-prime, then $A$ is not prime. Indeed,
there exist $I,J\neq 0$ $(\Sigma,\Delta)$-invariant ideals of $R$ such that $IJ=0$, so $IA,JA\neq
0$ and $IAJA=IJA=0$, i.e., $A$ is not prime.
\end{proof}

\section{Examples}

Some important examples of skew $PBW$ extensions covered by Theorems \ref{Terminator2},
\ref{3.3.5a} or \ref{5.2.4N} are given in the following table.  The definition of each ring or
algebra can be found in \cite{lezamareyes1}. For each example we have marked with $\checkmark$ the
theorem that can be applied.

\newpage

\begin{table}[htb]
\centering \tiny{
\begin{tabular}{|l|c|c|c|}\hline 
\textbf{Ring} & \textbf{\ref{Terminator2}} & \textbf{3.3.5} & \textbf{3.4.11}
\\ \hline \hline Habitual polynomial ring $R[x_1,\dotsc,x_n]$ & $\checkmark$ &  & \\
\cline{1-4} Ore extension of bijective type $R[x_1;\sigma_1 ,\delta_1]\cdots [x_n;\sigma_n
,\delta_n]$, &  & & \\
$R$ commutative Noetherian semiprime, $\delta_i\delta_j=\delta_j\delta_i$  &  & & $\checkmark$ \\
\cline{1-4} Weyl algebra $A_n(K)$ & $\checkmark$ &  & \\
\cline{1-4} Extended Weyl algebra $B_n(K)$ &
$\checkmark$ &  & \\
\cline{1-4} Universal enveloping algebra of a Lie algebra
$\mathcal{G}$, $\cU(\mathcal{G})$ & $\checkmark$ &  & \\
\cline{1-4} Tensor product $R\otimes_K \cU(\mathcal{G})$ & $\checkmark$ &  & \\
\cline{1-4} Crossed product $R*\cU(\mathcal{G})$ & $\checkmark$ &  & \\
\cline{1-4} Algebra of q-differential operators $D_{q,h}[x,y]$
&  &  & $\checkmark$ \\
\cline{1-4} Algebra of shift operators $S_h$ &  & $\checkmark$ & \\
\cline{1-4}
Mixed algebra $D_h$ &  &  & $\checkmark$ \\
\cline{1-4} Discrete linear systems $K[t_1,\dotsc,t_n][x_1,\sigma_1]\dotsb[x_n;\sigma_n]$ &  &
$\checkmark$ & \\
\cline{1-4} Linear partial shift operators
$K[t_1,\dotsc,t_n][E_1,\dotsc,E_n]$ & & $\checkmark$ & \\
\cline{1-4} Linear partial shift operators $K(t_1,\dotsc,t_n)[E_1,\dotsc,E_n]$ &  & $\checkmark$ & \\
\cline{1-4} L. P. Differential operators
$K[t_1,\dotsc,t_n][\partial_1,\dotsc,\partial_n]$ & $\checkmark$ &  & \\
\cline{1-4} L. P. Differential operators
$K(t_1,\dotsc,t_n)[\partial_1,\dotsc,\partial_n]$ & $\checkmark$ &  & \\
\cline{1-4} L. P. Difference operators
$K[t_1,\dotsc,t_n][\Delta_1,\dotsc,\Delta_n]$ &  &  & $\checkmark$ \\
\cline{1-4} L. P. Difference operators
$K(t_1,\dotsc,t_n)[\Delta_1,\dotsc,\Delta_n]$ &  &  & $\checkmark$ \\
\cline{1-4} L. P. $q$-dilation operators
$K[t_1,\dotsc,t_n][H_1^{(q)},\dotsc,H_m^{(q)}]$ &  & $\checkmark$ & \\
\cline{1-4} L. P. $q$-dilation operators
$K(t_1,\dotsc,t_n)[H_1^{(q)},\dotsc,H_m^{(q)}]$ &  & $\checkmark$ & \\
\cline{1-4} L. P. $q$-differential operators
$K[t_1,\dotsc,t_n][D_1^{(q)},\dotsc,D_m^{(q)}]$ &  &  & $\checkmark$\\
\cline{1-4} L. P. $q$-differential operators
$K(t_1,\dotsc,t_n)[D_1^{(q)},\dotsc,D_m^{(q)}]$ &  &  & $\checkmark$ \\
\cline{1-4} Diffusion algebras & $\checkmark$ &  & \\
\cline{1-4} Additive analogue of the Weyl algebra $A_n(q_1,\dotsc,q_n)$ &  &  & $\checkmark$ \\
\cline{1-4} Multiplicative analogue of the Weyl algebra
$\cO_n(\lambda_{ji})$ &  & $\checkmark$ & \\
\cline{1-4} Quantum algebra $\cU'(\mathfrak{so}(3,K))$ & $\checkmark$ &  & \\
\cline{1-4} 3-dimensional skew polynomial algebras & $\checkmark$ &  & \\
\cline{1-4} Dispin algebra $\cU(osp(1,2))$, & $\checkmark$ &  & \\
\cline{1-4} Woronowicz algebra $\cW_{\nu}(\mathfrak{sl}(2,K))$ & $\checkmark$
 &  & \\
\cline{1-4}
Complex algebra $V_q(\mathfrak{sl}_3(\mathbb{C}))$ &  & $\checkmark$ & \\
\cline{1-4} Algebra \textbf{U} &  & $\checkmark$ & \\
\cline{1-4} Manin algebra $\cO_q(M_2(K))$ &  & $\checkmark$ & \\
\cline{1-4} Coordinate algebra of the quantum group
$SL_q(2)$ &  & $\checkmark$ & \\
\cline{1-4} $q$-Heisenberg algebra \textbf{H}$_n(q)$ &  & $\checkmark$ & \\
\cline{1-4} Quantum enveloping algebra of
$\mathfrak{sl}(2,K)$, $\cU_q(\mathfrak{sl}(2,K))$ &  & $\checkmark$ & \\
\cline{1-4} Hayashi algebra $W_q(J)$ & $\checkmark$ &  & \\
\cline{1-4}
\end{tabular}}
\caption{Prime ideals of skew $PBW$ extensions.}
\end{table}

\begin{example}
Theorem \ref{3.3.5a} gives a description of prime ideals for the ring of skew quantum polynomials
over commutative Noetherian rings. Skew quantum polynomials were defined in \cite{lezamareyes1},
and represent a generalization of Artamonov's quantum polynomials (see \cite{Artamonov},
\cite{Artamonov2}). They can be defined as a localization of a quasi-commutative bijective skew
$PBW$ extension. We recall next its definition. Let $R$ be a ring with a fixed matrix of parameters
$\textbf{q}:=[q_{ij}]\in M_n(R)$, $n\geq 2$, such that $q_{ii}=1=q_{ij}q_{ji}=q_{ji}q_{ij}$ for
every $1\leq i,j\leq n$, and suppose also that it is given a system $\sigma_1,\dots,\sigma_n$ of
automorphisms of $R$. The ring of \textit{skew quantum polynomials over $R$}, denoted by
$R_{\textbf{q},\sigma}[x_1^{\pm 1 },\dots,x_r^{\pm 1}, x_{r+1},\dots,x_n]$, is defined as follows:
\begin{enumerate}
\item[\rm{(i)}]$R\subseteq R_{\textbf{q},\sigma}[x_1^{\pm 1},\dots,x_r^{\pm 1},
x_{r+1},\dots,x_n]$;
\item[\rm{(ii)}]$R_{\textbf{q},\sigma}[x_1^{\pm 1
},\dots,x_r^{\pm 1}, x_{r+1},\dots,x_n]$ is a free left $R$-module with basis
\begin{equation*}\label{equ1.4.2}
\{x_1^{\alpha_1}\cdots x_n^{\alpha_n}|\alpha_i\in \mathbb{Z} \ \text{for}\ 1\leq i\leq r \
\text{and} \ \alpha_i\in \mathbb{N}\ \text{for}\ r+1\leq i\leq n\};
\end{equation*}
\item[\rm{(iii)}] the variables $x_1,\dots,x_n$ satisfy the defining relations
\begin{center}
$x_ix_i^{-1}=1=x_i^{-1}x_i$, $1\leq i\leq r$,

$x_jx_i=q_{ij}x_ix_j$, $x_ir=\sigma_i(r)x_i$, $r\in R$, $1\leq i,j\leq n$.
\end{center}
\end{enumerate}
When all automorphisms are trivial, we write $R_{\textbf{q}}[x_1^{\pm 1 },\dots,x_r^{\pm 1},
x_{r+1},\dots,x_n]$, and this ring is called the ring of \textit{quantum polynomials over $R$}. If
$R=K$ is a field, then $K_{\textbf{q},\sigma}[x_1^{\pm 1 },\dots,x_r^{\pm 1}, x_{r+1},\dots,x_n]$
is the \textit{algebra of skew quantum polynomials}. For trivial automorphisms we get the
\textit{algebra of quantum polynomials} simply denoted by $\mathcal{O}_\textbf{q}$ (see
\cite{Artamonov}). When $r=0$, $R_{\textbf{q},\sigma}[x_1^{\pm 1},\dots,x_r^{\pm 1},
x_{r+1},\dots,x_n]=R_{\textbf{q},\sigma}[x_1,\dots,x_n]$ is the
\textit{$n$-mul\-ti\-pa\-ra\-me\-tric skew quantum space over $R$}, and when $r=n$, it coincides
with $R_{\textbf{q},\sigma}[x_1^{\pm 1},\dots,x_n^{\pm 1}]$, i.e., with the
\textit{$n$-multiparametric skew quantum torus over $R$}.

Note that $R_{\textbf{q},\sigma}[x_1^{\pm 1},\dots,x_r^{\pm 1}, x_{r+1},\dots,x_n]$ can be viewed
as a localization of the $n$-mul\-ti\-pa\-ra\-me\-tric skew quantum space, which, in turn, is a a
skew $PBW$ extension. In fact, we have the quasi-commutative bijective skew $PBW$ extension
\begin{equation*}\label{equ2.2.3}
A:=\sigma(R)\langle x_1,\dots, x_n\rangle, \, \text{with} \, x_ir=\sigma_i(r)x_i \, \text{and}\,
x_jx_i=q_{ij}x_ix_j, 1\leq i,j\leq n;
\end{equation*}
observe that $A=R_{\textbf{q},\sigma}[x_1,\dots,x_n]$. If we set
\begin{equation*}
S:=\{rx^{\alpha}\mid r\in R^*, x^{\alpha}\in {\rm Mon}\{x_1,\dots,x_r\}\},
\end{equation*}
then $S$ is a multiplicative subset of $A$ and
\begin{equation*}\label{equ2.7.4}
S^{-1}A\cong R_{\textbf{q},\sigma}[x_1^{\pm 1},\dots,x_r^{\pm 1}, x_{r+1},\dots,x_n]\cong AS^{-1}.
\end{equation*}
Thus, if $R$ is a commutative Noetherian ring, then Theorem \ref{3.3.5a} gives a description of
prime ideals for $A$. With this, we get a description of prime ideals for
$R_{\textbf{q},\sigma}[x_1^{\pm 1},\dots,x_r^{\pm 1}, x_{r+1},\dots,x_n]$ since it is well known
that there exists a bijective correspondence between the prime ideals of $S^{-1}A$ and the prime
ideals of $A$ with empty intersection with $S$ (recall that $A$ is left (right) Noetherian, Theorem
\ref{1.3.4}).
\end{example}




\begin{thebibliography}{200}

\bibitem{Lezama-OreGoldie}\textbf{Acosta, J.P., Chaparro, C., Lezama, O., Ojeda, I., and Venegas, C.}, \textit{Ore and Goldie theorems for
skew PBW extensions}, Asian-European Journal of Mathematics, 6 (4), 2013, 1350061-1; 1350061-20.


\bibitem{Artamonov}\textbf{Artamonov, V.},  \textit{Quantum polynomials},
WSPC Proceedings, 2008.

\bibitem{Artamonov2}\textbf{Artamonov, V.},  \textit{Serre's quantum problem},
Russian Math. Surveys, 53(4), 1998, 657-730.

\bibitem{Bell}\textbf{Bell, A. and Goodearl, K.}, \textit{Uniform rank over differential operator rings and
Poincaré-Birkhoff-Witt extensions}, Pacific Journal of Mathematics, 131(1), 1988, 13-37.

\bibitem{Gomez-Torrecillas2}\textbf{Bueso, J., Gómez-Torrecillas, J. and Verschoren, A.}, \textit{Algorithmic Methods in
noncommutative Algebra: Applications to Quantum Groups}, Kluwer, 2003.




\bibitem{Goodearl}\textbf{Goodearl, K. and Warfield, R. Jr.}, \textit{An Introduction to Non-commutative Noetherian Rings}, London Mathematical
Society, ST 61, 2004.

\bibitem{Goodearl2}\textbf{Goodearl, K.}, \textit{Prime ideals in skew polynomial rings and quantized Weyl algebras}, J. of Algebra,
150, 1992, 324-377.




\bibitem{LezamaGallego}\textbf{Lezama, O. and Gallego, C.}, {\em Gr\"obner bases for ideals of sigma-PBW extensions}, Communications in Algebra,
39 (1), 2011, 50-75.

\bibitem{lezamareyes1}\textbf{Lezama, O. \& Reyes, M.}, {\em Some homological properties of skew $PBW$
extensions}, Comm. in Algebra, 42, (2014), 1200-1230.


\bibitem{McConnell}\textbf{McConnell, J. and Robson, J.}, \textit{Non-commutative Noetherian Rings},
Graduate Studies in Mathematics, AMS, 2001.





\end{thebibliography}
\end{document}